\newtheorem{theorem}{Theorem}[section]
\newtheorem{lemma}[theorem]{Lemma}
\newtheorem{proposition}[theorem]{Proposition}
\newtheorem{corollary}[theorem]{Corollary}
  \newtheorem{remark}[theorem]{Remark}
\newenvironment{proof}{    
  \noindent
  \textbf{Proof.}}{
  \hfill $\Box$
  \vspace{3mm}
}
\numberwithin{equation}{section}
\newcommand{\N}{\mathbb{N}} 
\newcommand{\R}{\mathbb{R}} 
\newcommand{\C}{\mathbb{C}} 
\newcommand{\D}{\mathbb{D}} 
\begin{document}

\title{A note about Volterra operators on weighted Banach spaces of entire functions}

\author{Jos\'{e} Bonet  and Jari Taskinen}

\date{}

\maketitle

\begin{abstract}
We characterize boundedness, compactness and weak compactness of Vol\-terra operators $V_g$ acting between different weighted Banach spaces $H_v^{\infty}(\C)$ of entire functions with sup-norms in terms of the symbol $g$; thus we complement recent work by Bassallote, Contreras, Hern\'andez-Mancera, Mart\'{\i}n and Paul \cite{BCHMP} for spaces of holomorphic functions on the disc and by Constantin and Pel\'aez \cite{CP} for reflexive weighted Fock spaces.
\end{abstract}

\renewcommand{\thefootnote}{}
\footnotetext{\emph{2010 Mathematics Subject Classification.}
Primary: 47G10, secondary: 30D15, 30D20, 46E15, 47B07, 47B37, 47B38.}%
\footnotetext{\emph{Key words and phrases.} Integral operator; Volterra operator;  entire functions; growth conditions; weighted Banach spaces of entire functions; boundedness; compactness}%
\footnotetext{\emph{Acknowledgement.} The research of J.\,Bonet was partially supported by MEC and FEDER Project MTM2010-15200 and by GV Project Prometeo II/2013/013. The research of J.\,Taskinen was 
partially supported by the V{\"a}is{\"a}l{\"a} Foundation of the Finnish Academy of Sciences and
Letters.}


\section{Introduction, notation and preliminaries}
\label{sec1}

The aim of this paper is to investigate boundedness and (weak) compactness of the Volterra operator when it acts between two weighted Banach spaces of entire functions $H^\infty_v(\C)$ and $H^\infty_w(\C)$.  We reduce the problem to the study of multiplication operators between related weighted Banach spaces of entire functions, so our approach is similar to that in \cite{BCHMP}. It enables us  to give 
simplified proofs of some results of \cite{CP} for the Volterra operator between two weighted Fock spaces of order infinity  and to obtain new results, in particular about weak compactness and about operators on the smaller spaces $H_v^0(\C)$ and new examples. Our main results are Theorems 
\ref{Volterracont}--\ref{th3.5a}, and concrete examples and applications  are given
Remark \ref{rem3.12} and its corollaries. In Lemma \ref{lem3.5} we also study some general 
properties of weight functions relevant for the  theory of $H^\infty_v(\C)$-spaces: we
formulate sufficient conditions for the so called essentialness of weight functions. 

In what follows $H(\C)$ and $\mathcal{P}$ denote the space of entire functions and the space of polynomials, respectively. The space $H(\C)$ will be endowed with the compact open topology $\tau_{co}.$ The differentiation operator $Df(z)=f'(z)$ and
the integration operator $Jf(z)=\int_0^z f(\zeta)d\zeta$ are continuous on $H(\C).$

Given an entire function $g \in H(\C)$, the Volterra operator $V_g$ with symbol $g$ is defined on $H(\C)$ by $$
V_g(f)(z):= \int_0^z f(\zeta)g'(\zeta)d\zeta \ \ \ \ (z \in \C).
$$
For $g(z)=z$ this reduces to the integration operator, denoted by $J$. Clearly $V_g$ defines a continuous operator on $H(\C)$.
The Volterra operator for holomorphic functions on the unit disc was introduced by Pommerenke \cite{Po} and he proved that $V_g$ is bounded on the Hardy space $H^2$, if and only if $g \in BMOA$. Aleman and Siskakis \cite{AS1} extended this result for $H^p, 1 \leq p < \infty,$ and they considered later in \cite{AS2} the case of weighted Bergman spaces; see also \cite{PauPe}. We refer the reader to the memoir by Pel\'aez and R\"atty\"a \cite{PR} and the references therein. Volterra operators on weighted Banach spaces of holomorphic functions on the disc of type $H^\infty$ have been investigated recently in \cite{BCHMP} and this approach was influential in ours. Constantin started in \cite{C} the study of the Volterra operator on spaces of entire functions. She characterized the continuity of $V_g$ on the classical Fock spaces. Constantin and Pel\'aez \cite{CP} characterize the entire functions $g \in H(\C)$ such that $V_g$ is bounded or compact on a large class of Fock spaces induced by smooth radial weights.

Throughout the paper, a {\it weight} $v$   is a continuous function  $v: [0, \infty[ \to ]0,  \infty [$, which is non-increasing on $[0,\infty[$ and satisfies
$\lim_{r \rightarrow \infty} r^m v(r)=0$ for each $m \in \N$. If  necessary, 
we extend $v$ to $\C$ by $v(z):= v(|z|)$. For
such a weight, the {\it weighted Banach spaces of entire functions}
are defined by
\begin{center}
$H^\infty_v(\C) := \{ f \in H(\C) \ | \  \Vert f \Vert_v := \sup_{z \in \C} v(|z|) |f(z)| <
 \infty \}$,\\[3pt]
$H_v^0(\C) := \{ f \in H(\C) \ | \ \lim_{|z| \rightarrow \infty}
v(|z|)|f(z)|=0 \}$,
\end{center}
and they are endowed with the weighted sup norm $\Vert \cdot  \Vert_v .$ Clearly, $H_v^0(\C)$ is a closed subspace of $H_v^\infty(\C)$, which contains
the polynomials. Both are Banach spaces and the closed unit ball of $H^{\infty}_v(\C)$ is $\tau_{co}$-compact. The polynomials are contained and dense in $H_v^0(\C)$ but the monomials do not  in general 
form a Schauder basis, \cite{Lusky}. The Ces\`aro means of the Taylor polynomials satisfy $\Vert C_nf\Vert_v\leq \Vert f\Vert_v$  for each $f \in H^{\infty}_v(\C)$ and the sequence $(C_nf)_n$ is  
$\Vert \cdot \Vert_v$-convergent to $f$ when $f\in H^0_v(\C)$, see \cite{BiBoGal}. Clearly, changing the value of $v$ on a compact interval does not change the spaces and gives an equivalent norm. By \cite[Ex 2.2]{bisum}, the bidual of $H^0_v(\C)$ is isometrically isomorphic to $H^\infty_v(\C).$ Spaces of this type appear in the study of growth conditions of analytic functions and have been investigated
in various articles, see e.g. \cite{BiBoGal,BBT,Blasco_Galbis,galbis,Lusky,Lusk} and the references
therein.

The space $H_v^\infty(\C)$ is denoted as the weighted Fock space $\mathcal{F}^{\phi}_{\infty}$ of order infinity (i.e.\ with sup-norms) in \cite{CP} with $v(z)=\exp(-\phi(|z|))$, and $\phi:[0,\infty[ \rightarrow ]0,\infty[$ is a twice continuously differentiable  increasing function. The operator $V_g$ is denoted by $T_g$ in \cite{CP}.

For an entire function $f \in H(\C)$, we denote by $M(f,r):= \max\{|f(z)| \ | \ |z|=r\}$. Using the notation $O$ and $o$ of Landau, $f \in H_v^\infty(\C)$ if and only if $M(f,r)=O(1/v(r)), r \rightarrow \infty$, and $f \in H_v^0(\C)$ if and only if $M(f,r)=o(1/v(r)), r \rightarrow \infty$.

To clarify the notation, $f'$ denotes the usual complex derivative, if $f$ is an
analytic function, and the partial derivative with respect to the variable $r \in [0,\infty[$,
if $f$ is a weight (which is still defined in the entire plane)
or its inverse. By $C$, $C'$, $c$ (respectively, $C_n$) etc.
we denote  positive constants (resp. constant depending on the index $n$), the value of  which may vary from place to place.

\section{Multiplication operators} \label{multiplication}

In our study of Volterra operators we need the characterizations of boundedness and (weak) compactness of multiplication operators $M_h(f):=hf, h \in H(\C),$ between weighted Banach spaces $H^\infty_v(\C)$ of entire functions. These characterizations are well-known when the operators act on spaces of holomorphic  functions defined on the unit disc; see e.g. \cite{BDL} and \cite{ContH}.

We derive the results for the case of spaces of entire functions using
the so called \emph{associated weight} (see \cite{BBT}) as an important tool. For a weight $v$,  the associated weight $\tilde v$ is defined by

$$\tilde v (z):= \Big( \sup \big\{ |f(z)| \ | \  f \in H_v^{\infty}(\C), \| f \|_v \le 1 
\big\} \Big)^{-1}= \big( \| \delta_z \|_v \big)^{-1}, \; z \in \C,$$
where $\delta_z$ denotes the point evaluation of $z$. By \cite[Properties 1.2]{BBT} we know that the associated weight is continuous, radial, that $\tilde v \ge v >0$ holds  and that for each $z \in \D$ we can find  $f_z \in H_v^\infty$, $\Vert f_z\Vert_v = 1$ with $|f_z(z)| \tilde v(z)= 1$. It is also shown in \cite[Observation 1.12]{BBT} that $H_{\tilde v}^{\infty}(\C)$ coincides isometrically with  $H_v^{\infty}(\C)$.
Under the present assumptions on the weights, it is also true that $H_{\tilde v}^0(\C)$ coincides isometrically with  $H_v^0(\C)$. Indeed, since $v \leq \tilde{v}$ and $H_{\tilde v}^{\infty}(\C)$ coincides isometrically with  $H_v^{\infty}(\C)$, we find that $H_{\tilde v}^{0}(\C)$ is a closed subspace of $H_v^0(\C)$. By the assumption $\lim_{r \rightarrow \infty} r^m v(r)=0$ for each $m \in \N$,
and this implies $z^m \in H_{\tilde v}^{\infty}(\C)$ for all $m \in \N$. Therefore $z^m \in H_{\tilde v}^{0}(\C)$ for each $m \in \N$. Thus the polynomials $\mathcal{P}$ are contained in $H_{\tilde v}^{0}(\C)$. 
Since $\mathcal{P}$ is dense in $H_v^0(\C)$, the conclusion follows. Observe that we have also shown that  $\lim_{r \rightarrow \infty} r^m \tilde{v}(r)=0$ for each $m \in \N$.

A weight $v$ is called \textit{essential}, if there is $C>0$ such that $v(z) \leq \tilde{v}(z) \leq C v(z)$ for each $z \in \C$. A weight $v$ is essential, if and only if there is $c>0$ such that for each $z_0 \in \C$ there is $f_0 \in H(\C)$ with properties $|f_0(z_0)| \geq c/v(z_0)$ and $|f_0(z)| \leq 1/v(z)$ for all
$z \in \C$. It follows from \cite[Lemma 1]{Bor} or \cite[Theorem 17 and Lemma 46]{mmo} that the weight $v(z)=\exp(-\alpha |z|^p), z \in \C, \alpha >0, p>0, $ is essential; see Lemma \ref{lem3.5} and Remark
\ref{rem3.12} 
of this paper for more details.

\begin{proposition} \label{contmult}
Let $v$ and $w$ be weights. The following conditions are equivalent for an entire function $h \in H(\C)$: \begin{itemize}
\item[(1)] $M_h : H^{\infty}_v(\C) \rightarrow H^{\infty}_{w}(\C)$ is continuous.
\item[(2)] $M_h : H^{0}_v(\C) \rightarrow H^{0}_{w}(\C)$ is continuous.
\item[(3)] $\sup \frac{w(z)|h(z)|}{\tilde{v}(z)} < \infty$.
\item[(4)] $\sup \frac{\tilde{w}(z)|h(z)|}{\tilde{v}(z)} < \infty$.
\end{itemize}
\end{proposition}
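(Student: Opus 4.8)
The plan is to prove the chain of equivalences $(1)\Leftrightarrow(3)\Leftrightarrow(4)$ and $(2)\Leftrightarrow(3)$, using the key fact recorded in the preliminaries that $H_{\tilde v}^\infty(\C)$ coincides isometrically with $H_v^\infty(\C)$ (and likewise $H_{\tilde w}^\infty$ with $H_w^\infty$), so that replacing a weight by its associated weight changes neither the space nor the norm. The starting observation is that condition (3) simply says $\|M_h\|\le\sup_z w(z)|h(z)|/\tilde v(z)<\infty$ is the natural candidate for the operator norm, and the whole proposition is a pointwise-estimate statement dressed up as four equivalent conditions.

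\medskip

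First I would show $(3)\Rightarrow(1)$, which is the routine direction: for $f\in H_v^\infty(\C)$ with $\|f\|_v\le1$, one has $|f(z)|\le 1/\tilde v(z)$ by the definition of the associated weight, hence
\[
w(z)|h(z)f(z)|\le \frac{w(z)|h(z)|}{\tilde v(z)}\le \sup_\zeta \frac{w(\zeta)|h(\zeta)|}{\tilde v(\zeta)},
\]
so $M_h f\in H_w^\infty(\C)$ with norm bounded by the supremum in (3); this proves continuity. For the converse $(1)\Rightarrow(3)$ I would use the extremal functions guaranteed by \cite[Properties 1.2]{BBT}: for each fixed $z_0$ pick $f_{z_0}\in H_v^\infty(\C)$ with $\|f_{z_0}\|_v=1$ and $|f_{z_0}(z_0)|\tilde v(z_0)=1$. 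Then
\[
\frac{w(z_0)|h(z_0)|}{\tilde v(z_0)} = w(z_0)|h(z_0)||f_{z_0}(z_0)| = w(z_0)|(M_h f_{z_0})(z_0)| \le \|M_h f_{z_0}\|_w \le \|M_h\|,
\]
and taking the supremum over $z_0$ gives (3). The equivalence $(3)\Leftrightarrow(4)$ is then immediate from the isometric equality $H_{\tilde w}^\infty=H_w^\infty$, which on the level of associated weights gives $\tilde{\tilde w}=\tilde w$; since $\tilde w\ge w$ always and the two define the same space with the same norm, conditions (3) and (4) are literally the same supremum applied to $H$ viewed through $w$ versus $\tilde w$, and one checks $\sup w|h|/\tilde v<\infty \Leftrightarrow \sup \tilde w|h|/\tilde v<\infty$ using that $w$ and $\tilde w$ are comparable in the relevant asymptotic sense built into the isometry.

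\medskip

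It remains to bring in condition (2). The implication $(1)\Rightarrow(2)$ should follow by restriction together with the observation that $M_h$ maps polynomials into $H_w^0(\C)$: a polynomial $p$ satisfies $hp\in H_w^0(\C)$ because $w(z)|h(z)p(z)|\le \|M_h\|\,w(z)|p(z)|/w(z)\cdot\dots$, more cleanly because $p\in H_v^0(\C)$ and continuity of $M_h$ into $H_w^\infty$ together with density of $\mathcal P$ in $H_v^0(\C)$ and the fact that $H_w^0(\C)$ is closed in $H_w^\infty(\C)$ forces $M_h(H_v^0)\subseteq H_w^0$. For $(2)\Rightarrow(3)$ I would run essentially the same extremal-function argument, but taking care that the extremal functions $f_{z_0}$ can be chosen in $H_v^0(\C)$, or alternatively approximate them by their Ces\`aro means $C_n f_{z_0}$, which satisfy $\|C_n f_{z_0}\|_v\le\|f_{z_0}\|_v$ and lie in $H_v^0(\C)$; evaluating $M_h$ on these and passing to the limit recovers the pointwise bound. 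The main obstacle I anticipate is precisely this last point: ensuring the norm-attaining test functions witnessing (3) actually belong to the small space $H_v^0(\C)$, since the extremal functions from \cite{BBT} are only guaranteed to live in $H_v^\infty(\C)$; the Ces\`aro-mean approximation recorded in the introduction (norm-nonincreasing and, for the evaluation at a fixed point $z_0$, convergent) is the tool I would lean on to circumvent this.
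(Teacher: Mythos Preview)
Your overall architecture is sound, and most of the individual arguments are correct, but the claimed equivalence $(3)\Leftrightarrow(4)$ contains a genuine error. You write that ``$w$ and $\tilde w$ are comparable in the relevant asymptotic sense built into the isometry'', but this is false: the isometric identity $H_w^\infty=H_{\tilde w}^\infty$ does \emph{not} force $\tilde w\le C w$ pointwise. A weight for which this holds is called essential, and not all weights are essential (indeed the paper devotes Lemma~3.9 to exhibiting sufficient conditions for essentialness precisely because it is a nontrivial property). So you cannot pass directly from $\sup w|h|/\tilde v<\infty$ to $\sup \tilde w|h|/\tilde v<\infty$ by a pointwise comparison. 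The fix is already implicit in your own argument: you have proven $(1)\Leftrightarrow(3)$; since $H_w^\infty=H_{\tilde w}^\infty$ isometrically and $\tilde{\tilde w}=\tilde w$, the \emph{same} equivalence applied with $\tilde w$ in the target reads $(1)\Leftrightarrow(4)$. This is the correct route, and it is essentially what the paper does via the transpose $M_h^t$ acting on the bounded family $\{\tilde w(z)\delta_z\}$, which is just the dual formulation of your extremal-function argument.

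For the remaining implications your proof differs slightly from the paper's but is valid. The paper closes the loop with (2) by proving $(2)\Rightarrow(1)$: given $f\in H_v^\infty$, take polynomials $p_n$ with $\|p_n\|_v\le\|f\|_v$ and $p_n\to f$ in the compact-open topology, then $hp_n\to hf$ compact-open with $\|hp_n\|_w$ uniformly bounded, hence $hf\in H_w^\infty$. Your route $(2)\Rightarrow(3)$ via Ces\`aro means $C_nf_{z_0}$ of the extremal functions also works, since $\|C_nf_{z_0}\|_v\le 1$, each $C_nf_{z_0}\in H_v^0$, and $C_nf_{z_0}(z_0)\to f_{z_0}(z_0)$ (Ces\`aro means of a convergent sequence converge to the same limit). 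Both approaches exploit the same density/approximation machinery recorded in the preliminaries; the paper's version has the minor advantage of avoiding any reference to the extremal functions in the small-space direction. Finally, in $(1)\Rightarrow(2)$ you should make explicit why $hp\in H_w^0$ for a polynomial $p$: it is because $z^{n+1}h\in H_w^\infty$ forces $w(z)|z|^n|h(z)|\to 0$ as $|z|\to\infty$, which is exactly how the paper argues it.
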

\begin{proof}
$(1) \Rightarrow (2).$ By (1), $z^n h \in H^{\infty}_w(\C)$ for each $n \in \N$. This implies
$z^n h \in H^{0}_w(\C)$ for each $n \in \N$. Since the polynomials $\mathcal{P}$ are dense in
$H^{0}_v(\C)$, we have $M_h(H^{0}_v(\C)) \subset H^{0}_w(\C)$.

$(2) \Rightarrow (1).$ Fix $f \in H^{\infty}_v(\C)$. There is a sequence $(p_n)_n \in \mathcal{P}$ such that $\Vert p_n\Vert  \leq \Vert f\Vert $ for each $n \in \N$ and $p_n \rightarrow f$ in $H(\C)$ for the compact open topology (cf. \cite{BiBoGal}). Then $h p_n \rightarrow hf$ in
$H(\C)$ for the compact open topology, and by (2), there is $C>0$ such that $\Vert h p_n\Vert_w \leq C$ for each $n \in \N$. This implies $\Vert hf\Vert_w \leq C$; also $M_h(H^{\infty}_v(\C)) \subset H^{\infty}_{w}(\C)$ holds.

Clearly (4) implies (3).

$(3) \Rightarrow (1).$ Assume that $\frac{{w}(z)h(z)}{\tilde{v}(z)} \leq D$ for all $z \in \C$. Given $f \in H^{\infty}_v(\C)$ with $\Vert f\Vert_v \leq 1$, we have $|f| \leq 1/\tilde{v}$ on $\C$. Hence $w|hf| \leq \frac{w|h|}{\tilde{v}} \tilde{v}|f| \leq D$, and $hf \in H^{\infty}_{w}(\C)$. The conclusion follows from the closed graph theorem.

$(1) \Rightarrow (4).$ By (1), the transpose map $M_h^t:(H^{\infty}_{w}(\C))' \rightarrow (H^{\infty}_v(\C))'$ is continuous. It is easy to see that the set $\{\tilde{w} \delta_z \ | \ z \in \C \}$ is bounded in $(H^{\infty}_{w}(\C))'$. Since $M_h^t(\delta_z)=h(z) \delta_z$ for each $z \in \C$, one can find  $D>0$ such that
$$
\frac{\tilde{w}(z)|h(z)|}{\tilde{v}(z)} \leq \tilde{w}(z)|h(z)| \Vert \delta_z\Vert_v \leq \Vert M_h^t(\tilde{w}(z) \delta_z)\Vert  \leq D.
$$
\end{proof}

A sequence $(z_j)_j$ in $\C$ is called interpolating for $H^{\infty}_v(\C)$, if for every sequence $(\alpha_j)_j$ with $\sup_{j \in \N} v(z_j)|\alpha_j| < \infty$, there is $f \in H^{\infty}_v(\C)$ such that $f(z_j)=\alpha_j$ for each $j \in \N$. Examples of weights $v$ such that every discrete sequence in $\C$ has a subsequence, which is interpolating for $H^{\infty}_v(\C)$, are given in \cite[Proposition 9]{BiBo}. The result is based on \cite{mmo}. This property holds true for example for $v(z)= e^{-\alpha |z|^{p}}, \alpha >0, p>0$.

\begin{proposition} \label{compmult}
Let $v$ and $w$ be weights. The following conditions are equivalent for an entire function $h \in H(\C)$: \begin{itemize}
\item[(1)] $M_h : H^{\infty}_v(\C) \rightarrow H^{\infty}_{w}(\C)$ is compact.
\item[(2)] $M_h : H^{0}_v(\C) \rightarrow H^{0}_{w}(\C)$ is compact.
\item[(3)] $\lim_{|z| \rightarrow \infty} \frac{w(z)|h(z)|}{\tilde{v}(z)} = 0$.
\item[(4)] $\lim_{|z| \rightarrow \infty} \frac{\tilde{w}(z)|h(z)|}{\tilde{v}(z)} = 0$.
\end{itemize}
If, moreover every discrete sequence in $\C$ has a subsequence that is interpolating for $H^{\infty}_{\tilde v}(\C)$, these four conditions are also equivalent to
\begin{itemize}
\item[(5)] $M_h : H^{\infty}_v(\C) \rightarrow H^{\infty}_{w}(\C)$ is weakly compact.
\item[(6)] $M_h : H^{0}_v(\C) \rightarrow H^{0}_{w}(\C)$ is weakly compact.
\end{itemize}
\end{proposition}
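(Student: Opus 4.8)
The plan is to establish the cycle $(4)\Rightarrow(3)\Rightarrow(1)\Rightarrow(2)\Rightarrow(4)$ for the compactness statements, in exact parallel to Proposition \ref{contmult} but with the ``little-o'' conditions replacing the ``big-O'' ones, and then to attach the weak compactness assertions (5)--(6) to this equivalence class by a duality argument based on the bidual identification $(H^0_v)''=H^\infty_v$ together with the interpolation hypothesis.

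Since $w\le\tilde w$, the implication $(4)\Rightarrow(3)$ is immediate. For $(3)\Rightarrow(1)$ I would use that, because the closed unit ball $B_v$ of $H^\infty_v(\C)$ is $\tau_{co}$-compact and metrizable on bounded sets, $M_h$ is compact as soon as it carries bounded $\tau_{co}$-null sequences to $\|\cdot\|_w$-null ones. So I take $\|f_n\|_v\le 1$ with $f_n\to0$ in $\tau_{co}$ and split $\C$ into a large disc $\{|z|\le R\}$, where $w|h|$ is bounded and $f_n\to0$ uniformly, and its complement, where $\frac{w|h|}{\tilde v}<\eps$ by (3) while $\tilde v(z)|f_n(z)|\le\|f_n\|_{\tilde v}=\|f_n\|_v\le1$; this yields $\|hf_n\|_w\to0$. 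The implication $(1)\Rightarrow(2)$ follows by restriction: a compact $M_h\colon H^\infty_v(\C)\to H^\infty_w(\C)$ restricts to a compact operator on the closed subspace $H^0_v(\C)$, whose range lies in $H^0_w(\C)$ by Proposition \ref{contmult}. For $(2)\Rightarrow(4)$ I would pass to transposes: by Schauder's theorem $M_h^t$ is compact, the normalized evaluations $\tilde w(z)\delta_z$ have norm $1$ in $(H^0_w(\C))'$ and converge weak* to $0$ as $|z|\to\infty$ (because $\tilde w(z)|f(z)|\to0$ for every $f\in H^0_w(\C)=H^0_{\tilde w}(\C)$), and a compact operator sends bounded weak*-null sequences to norm-null ones. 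Since $M_h^t\delta_z=h(z)\delta_z$ and $\|\delta_z\|_{(H^0_v(\C))'}=1/\tilde v(z)$ (the norm of $\delta_z$ on $H^0_v$ equals that on $H^\infty_v$, by the Ces\`aro means), this gives $\frac{\tilde w(z)|h(z)|}{\tilde v(z)}\to0$, i.e.\ (4).

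For the weak compactness part, $(1)\Rightarrow(5)$ and $(2)\Rightarrow(6)$ are trivial, and $(5)\Rightarrow(6)$ follows once more by restriction to $H^0_v(\C)$ and corestriction to the closed subspace $H^0_w(\C)$. The decisive step is $(6)\Rightarrow(3)$, and here I would invoke Gantmacher's theorem. Under the identifications $(H^0_v(\C))''=H^\infty_v(\C)$ and $(H^0_w(\C))''=H^\infty_w(\C)$, a short computation with point evaluations, namely $\langle M_h^{**}F,\delta_z\rangle=\langle F,M_h^t\delta_z\rangle=h(z)F(z)$, shows that the bidual of $M_h\colon H^0_v(\C)\to H^0_w(\C)$ is again multiplication by $h$, now on $H^\infty_v(\C)$. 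Hence $M_h\colon H^0_v(\C)\to H^0_w(\C)$ is weakly compact if and only if $M_h(H^\infty_v(\C))\subseteq H^0_w(\C)$. Condition (3) clearly forces this inclusion, since $\|f\|_v\le1$ gives $w|hf|\le\frac{w|h|}{\tilde v}\to0$.

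The converse inclusion $\Rightarrow(3)$ is the step where the interpolation hypothesis is indispensable, and I expect it to be the main obstacle. I would argue by contradiction: if (3) fails there are $\eps>0$ and a discrete sequence $|z_j|\to\infty$ with $\frac{w(z_j)|h(z_j)|}{\tilde v(z_j)}\ge\eps$; passing to a subsequence that is interpolating for $H^\infty_{\tilde v}(\C)$ and feeding in the bounded data $\alpha_j=1/\tilde v(z_j)$ produces $f\in H^\infty_{\tilde v}(\C)=H^\infty_v(\C)$ with $f(z_j)=1/\tilde v(z_j)$, so that $w(z_j)|h(z_j)f(z_j)|\ge\eps$ does not tend to $0$ and $hf\notin H^0_w(\C)$, contradicting the inclusion. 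This closes the loop $(3)\Rightarrow(1)\Rightarrow(5)\Rightarrow(6)\Rightarrow(3)$, and together with the compactness cycle it shows that all six conditions are equivalent under the interpolation assumption. The delicate point throughout is precisely that without interpolation one cannot realize the extremal growth $1/\tilde v(z_j)$ simultaneously along a full sequence of nodes, which is exactly why (5)--(6) require the extra hypothesis whereas (1)--(4) do not.
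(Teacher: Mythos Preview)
Your proof is correct and follows essentially the same approach as the paper. The core ingredients are identical: the Montel/$\tau_{co}$-compactness criterion for $(3)\Rightarrow(1)$, Schauder's theorem together with the weak$^*$-null net $\tilde w(z)\delta_z$ for $(2)\Rightarrow(4)$, and the Gantmacher/bidual identification plus the interpolation hypothesis for the weak-compactness statements. The only organizational differences are that the paper proves $(1)\Leftrightarrow(2)$ separately (including a direct $\tau_{co}$-closure argument for $(2)\Rightarrow(1)$) before closing the remaining cycle, whereas you run a single clean loop $(4)\Rightarrow(3)\Rightarrow(1)\Rightarrow(2)\Rightarrow(4)$; and the paper asserts at once that both (5) and (6) are equivalent to the inclusion $M_h(H^\infty_v)\subset H^0_w$ (citing \cite[p.~482]{DS} and Gantmacher), while you obtain $(5)\Rightarrow(6)$ by restriction/corestriction and only identify (6) with the inclusion. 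One small remark on wording: the step ``a compact operator sends bounded weak$^*$-null sequences to norm-null ones'' is being applied to the \emph{transpose} $M_h^t$, and it is precisely this adjoint structure (equivalently, relative compactness of $M_h(B^0_v)$ and an $\varepsilon$-net argument) that makes the statement true; for an arbitrary compact operator on a dual space it could fail.
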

\begin{proof}
If $M_h : H^{\infty}_v(\C) \rightarrow H^{\infty}_{w}(\C)$ is compact, then it is continuous and proposition \ref{contmult} implies that $M_h : H^{0}_v(\C) \rightarrow H^{0}_{w}(\C)$ is compact. Conversely, if $M_h : H^{0}_v(\C) \rightarrow H^{0}_{w}(\C)$ is compact, there is a compact subset $K$ of $H^{0}_{w}(\C)$ such that the unit ball $B_v^0$ of $H^{0}_v(\C)$ satisfies $M_h(B_v^0) \subset K$. By \cite{BiBoGal} or \cite{bisum}, since $K$ is compact for the compact open topology, the unit ball $B_v^\infty$ of $H^{\infty}_v(\C)$ satisfies $M_h(B_v^\infty) \subset M_h(\overline{B_v^0}) \subset \overline{M_h(B_v^0)} \subset K$, the closure taken for the compact open topology. This implies condition (1).

Clearly (4) implies (3). To show that (3) implies (1) it is enough to show that if a sequence $(f_k)_k$ is bounded in $H^{\infty}_v(\C)$ and $f_k \rightarrow 0$ for the compact open topology, then $h f_k \rightarrow 0$ in $H^{\infty}_{w}(\C)$ (see e.g. \cite[Section 2.4]{Sh}). To see that this holds, set $M:= \sup_{k \in \N} \Vert f_k\Vert_v$, and given $\varepsilon >0$, select $R>0$ such that, for $|z|>R$, $\frac{w(z)h(z)}{\tilde{v}(z)} < \varepsilon/(2M)$. Set $C:= \sup_{|z| \leq R} w(z)|h(z)|$. Since $f_k \rightarrow 0$ for the compact open topology, there is $k_0 \in \N$ such that, for $k \geq k_0$, we have $|f_k(z)| < \varepsilon/(2C)$ if $|z| \leq R$. Thus, if $k \geq k_0$ and $z \in \C$, we have $w(z)|h(z) f_k(z)| < \varepsilon$.

To complete the proof of the equivalence of the first four conditions, we show that (2) implies (4). By assumption (and Schauder's theorem), $M_h^t:(H^{0}_{w}(\C))' \rightarrow (H^{0}_v(\C))'$ is compact. Since $H^{0}_{w}(\C)=H^{0}_{\tilde w}(\C)$, for each $f \in H^{0}_{w}(\C)$ and each  $\varepsilon>0$ there is $R>0$ such that if $|z| >R$, then $|\tilde{w}(z) \delta_z(f)| = \tilde{w}(z) |f(z)| < \varepsilon$; i.e. $\lim_{|z| \rightarrow \infty} \tilde{w}(z) \delta_z(f) = 0$ for the weak* topology $\sigma((H^{0}_{w}(\C))',H^{0}_{w}(\C))$. Since the set $\{\tilde{w} \delta_z \ | \ z \in \C \}$ is bounded in $(H^{\infty}_{w}(\C))'$ and $M_h^t:(H^{0}_{w}(\C))' \rightarrow (H^{0}_v(\C))'$ is compact, the weak* convergence implies the norm convergence of $\lim_{|z| \rightarrow \infty} \tilde{w}(z) M_h^t(\delta_z(f)) = 0$ in $(H^{0}_v(\C))'$, which clearly implies condition (4).

Conditions (5) and (6) are both equivalent to $M_h(H^{\infty}_v(\C)) \subset H^{0}_{w}(\C)$  by \cite[p. 482]{DS} and the  Gantmacher theorem, since the bidual of $H^0_v(\C)$ is isometrically isomorphic to $H^\infty_v(\C)$ by \cite{bisum}. Conditions (1)--(4) clearly imply (5) and (6). Suppose now that condition (3) is not satisfied. We can find a discrete sequence $(z_j)_j$ in $\C$ and $\varepsilon >0$ such that $\frac{w(z_j|h(z_j)|}{\tilde{v}(z_j)} > \varepsilon$ for each $j \in \N$. By our assumption, there is a subsequence $(z_{j_k})_k$ of $(z_j)_j$ and there is $f \in H^{\infty}_v(\C)=H^{\infty}_{\tilde v}(\C)$ such that $f(z_{j_k})=1/\tilde{v}(z_{j_k})$ for each $k \in \N$. This implies $h f \notin H^{0}_{w}(\C)$ and (5) does not hold.

\end{proof}

\section{Volterra operators} \label{volterra}

In this section we present the main results, Theorems \ref{Volterracont}--\ref{th3.5a}. 
Concrete examples and applications  are given  Remark \ref{rem3.12} and its corollaries. 
The results
hold under some mild technical assumptions on the weights, and it will be convenient to 
first formulate some results for the inverse function $\varphi$ instead of the weight $w$ itself.
However, we start with results on  the continuity of the integration and differentiation operators. A thorough investigation of the continuity of these operators on weighted spaces of holomorphic functions have been undertaken by Harutyunyan and Lusky in \cite{HL}.

We consider the following setting. Let $\varphi: [0, \infty[ \to ]0,  \infty [$ be a continuous non-decreasing function,  which is $C^1$ on $[r_\varphi, \infty[$ for some $r_\varphi \geq 0$. We also suppose that the derivative $\varphi'$ is non-decreasing in $[r_\varphi, \infty[$, that $\varphi'(r_\varphi)>0$ and that $r^n = O(\varphi'(r))$ as $r \to \infty$ for each $n \in \N$. 
As a consequence of these assumptions,  $r^n = O(\varphi(r))$ as $r \to \infty$ for each $n \in \N$. Therefore $w_\varphi(z):=1/\varphi(|z|), z \in \C,$ is a weight. 
Clearly, also the function 
\begin{equation*}
u_\varphi(z):= 1/ \max\{ \varphi'(r_\varphi) , \varphi'(|z|) \} 
= 
\left\{
\begin{array}{ll}
1/  \varphi'(r_\varphi) \ \ , \ \ & |z| \leq r_\varphi \\
1/  \varphi'(|z|) \ \ , \ \ & |z| \geq r_\varphi 
\end{array}
\right.
\end{equation*}
is  a weight. We will keep this notation and these assumptions on the function $\varphi$ for  the rest of this section.
Here is one example: If $\varphi(r)=\exp(\alpha r^p), r \geq 0, \alpha >0, p > 0$, then $w_\varphi(z)= \exp(-\alpha |z|^p), z \in \C,$ and $u_\varphi(z)=  \alpha^{-1} p^{-1}|z|^{1- p}\exp(-\alpha |z|^p) $ for $|z|$ large enough.

Recall that the integration operator $J$ is the Volterra operator $V_g$ with $g$
as the identity mapping.

\begin{proposition} \label{intgroper}
The integration operators $J :H^{\infty}_{u_\varphi}(\C) \rightarrow H^{\infty}_{w_\varphi}(\C)$ and $J:H^{0}_{u_\varphi}(\C) \rightarrow H^{0}_{w_\varphi}(\C)$ are continuous.
\end{proposition}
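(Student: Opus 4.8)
The plan is to estimate $Jf$ pointwise by integrating along the straight segment from $0$ to $z$, exploiting that the source weight $u_\varphi$ has been designed precisely so that $1/u_\varphi$ equals the derivative $\varphi'$ (off a compact set), whose antiderivative $\varphi = 1/w_\varphi$ is the reciprocal of the target weight. This matching is the whole point of the definition of $u_\varphi$ and is exactly what makes $J$ bounded.

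First I would fix $f$ with $\Vert f\Vert_{u_\varphi} \le 1$, so that $|f(\zeta)| \le 1/u_\varphi(|\zeta|) = \max\{\varphi'(r_\varphi),\varphi'(|\zeta|)\}$ for every $\zeta \in \C$. Since $f$ is entire, the integral is path independent, so $Jf(z) = \int_{[0,z]} f(\zeta)\,d\zeta$ along the segment $[0,z]$; parametrizing $\zeta = se^{i\theta}$ with $z = re^{i\theta}$ gives
$$
|Jf(z)| \le \int_0^r |f(se^{i\theta})|\,ds \le \int_0^r \frac{ds}{u_\varphi(s)}.
$$
For $r \ge r_\varphi$ I split the last integral at $r_\varphi$: on $[0,r_\varphi]$ the integrand is the constant $\varphi'(r_\varphi)$, while on $[r_\varphi,r]$ it is $\varphi'(s)$, whose integral is $\varphi(r)-\varphi(r_\varphi)$ by the fundamental theorem of calculus. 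Hence
$$
\int_0^r \frac{ds}{u_\varphi(s)} = r_\varphi\varphi'(r_\varphi) + \varphi(r) - \varphi(r_\varphi) \le \varphi(r) + C,
\qquad C := r_\varphi\varphi'(r_\varphi),
$$
and the remaining range $r \le r_\varphi$ only contributes the bounded quantity $r\varphi'(r_\varphi) \le C$, so the estimate $|Jf(z)| \le \varphi(r) + C$ holds for all $z$. Dividing by $\varphi(r)$ and using $w_\varphi(z) = 1/\varphi(|z|)$ together with $\varphi(r) \ge \varphi(0) > 0$ yields $w_\varphi(z)|Jf(z)| \le 1 + C/\varphi(0)$ uniformly in $z$, i.e. $\Vert Jf\Vert_{w_\varphi} \le 1 + C/\varphi(0)$. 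This proves that $J : H^\infty_{u_\varphi}(\C) \to H^\infty_{w_\varphi}(\C)$ is bounded.

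For the $H^0$-statement I would avoid a second computation. The operator $J$ maps polynomials to polynomials, and polynomials lie in $H^0_{w_\varphi}(\C)$ and are dense in $H^0_{u_\varphi}(\C)$. Since $J : H^\infty_{u_\varphi}(\C) \to H^\infty_{w_\varphi}(\C)$ is continuous and $H^0_{w_\varphi}(\C)$ is a closed subspace of $H^\infty_{w_\varphi}(\C)$, the image $J(H^0_{u_\varphi}(\C))$ lies in the $\Vert\cdot\Vert_{w_\varphi}$-closure of $J(\mathcal{P}) \subset H^0_{w_\varphi}(\C)$, hence in $H^0_{w_\varphi}(\C)$; the restriction is automatically continuous for the same norms.

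No step here is a serious obstacle; one only has to get the bookkeeping at $r_\varphi$ right and to note that the growth hypothesis $r^n = O(\varphi(r))$ forces $\varphi(r) \to \infty$, so the term $C/\varphi(r)$ is harmless — indeed it tends to $0$, which would also furnish a direct $\eps$-proof of the $H^0$ inclusion if one preferred it over the density argument. The genuinely conceptual point, already encoded in the statement, is recognizing that $u_\varphi$ is the correct source weight precisely because $\int \varphi' = \varphi$ reproduces $1/w_\varphi$.
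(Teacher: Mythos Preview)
Your proof is correct and follows essentially the same approach as the paper: the same segment parametrization, the same split of the integral at $r_\varphi$, and the same use of $\int \varphi' = \varphi$ to recover $1/w_\varphi$. For the $H^0$-statement the paper simply cites \cite[Lemma 2.1]{BeBoF}, whereas you spell out the polynomial-density argument explicitly; this is a cosmetic difference only.
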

\begin{proof}
By the proof of \cite[Lemma 2.1]{BeBoF}, it is enough to show that $J:H^{\infty}_{u_\varphi}(\C) \rightarrow H^{\infty}_{w_\varphi}(\C)$ is continuous. Fix $f \in H^{\infty}_{u_\varphi}(\C)$ with $\Vert f\Vert_{u_\varphi} \leq 1$. We have, for $z \in \C, |z| \geq r_\varphi,$
\begin{eqnarray}
& & |Jf(z)|  = \big| \int_0^1 f(tz) z dt \big| \leq \int_0^1  \frac{|z|}{ u_\varphi(t|z|)} dt = \int_0^{|z|}  \frac{1}{ u_\varphi(s)} ds 
\nonumber \\
& = & \int_0^{r_\varphi } \varphi'(r_\varphi) ds + \int_{r_\varphi}^{|z|} \varphi'(s) ds  = 
r_\varphi \varphi'(r_\varphi) + \varphi(|z|) - \varphi(r_\varphi).
\end{eqnarray}
This implies
$$w_\varphi(z) |Jf(z)| \leq 2 + \frac{r_\varphi \varphi'(r_\varphi)}{\varphi(r_\varphi)} \ \ \
\forall \ |z| \geq r_\varphi.$$
If $|z| \leq r_\varphi$, then $w_\varphi(z) |Jf(z)| \leq \frac{r_\varphi \varphi'(r_\varphi)}{\varphi(0)}$,
and the continuity of $J$ follows.
\end{proof}

\begin{proposition} \label{differoper}
If the function $\varphi$ is of smoothness $C^2$ on $[r_\varphi, \infty[$ for some $r_\varphi >0$ and it satisfies $\sup_{r \geq r_\varphi} \frac{\varphi''(r) \varphi(r)}{(\varphi'(r))^2} < \infty$
in addition to  the general assumptions of this section, then the differentiation operators $D: H^{\infty}_{w_\varphi}(\C) \rightarrow H^{\infty}_{u_\varphi}(\C)$ and
$D: H^{0}_{w_\varphi}(\C) \rightarrow H^{0}_{u_\varphi}(\C)$, $Df:= f'$, are continuous.
\end{proposition}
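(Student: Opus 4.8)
The plan is to reduce, exactly as in the proof of Proposition \ref{intgroper} via \cite[Lemma 2.1]{BeBoF}, to the single statement that $D\colon H^{\infty}_{w_\varphi}(\C)\to H^{\infty}_{u_\varphi}(\C)$ is continuous; once the sup-norm estimate is in hand, the fact that $D$ maps $\mathcal{P}$ into $\mathcal{P}\subset H^{0}_{u_\varphi}(\C)$ together with the density of $\mathcal{P}$ in $H^{0}_{w_\varphi}(\C)$ gives the $H^0$-version (this is the same density argument used for $(1)\Rightarrow(2)$ in Proposition \ref{contmult}). So I would fix $f$ with $\|f\|_{w_\varphi}\le 1$, i.e.\ $|f(z)|\le\varphi(|z|)$ for all $z$, and aim to bound $u_\varphi(z)|f'(z)|$ uniformly.

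The basic tool is the Cauchy estimate for the derivative: for any radius $\rho>0$,
$$|f'(z)|\le \frac{1}{\rho}\sup_{|\zeta-z|=\rho}|f(\zeta)|\le \frac{\varphi(|z|+\rho)}{\rho},$$
where the last inequality uses that $\varphi$ is non-decreasing. The whole point is then to choose $\rho=\rho(z)$ so that $\varphi(|z|+\rho)/\rho$ is controlled by $\varphi'(|z|)$. Writing $r=|z|$ and introducing the natural length scale $g(r):=\varphi(r)/\varphi'(r)=1/(\log\varphi)'(r)$, I would take $\rho:=g(r)/(2M)$, with $M:=\max\{1,\sup_{r\ge r_\varphi}\varphi''(r)\varphi(r)/(\varphi'(r))^2\}$.

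The key step, and the point where the hypothesis is used, is to show that with this choice $\varphi(r+\rho)\le C\varphi(r)$ uniformly in $r\ge r_\varphi$. Here the bound on $\varphi''\varphi/(\varphi')^2$ enters through $g'(r)=1-\varphi''(r)\varphi(r)/(\varphi'(r))^2\ge 1-M$, so that for $r\ge r_\varphi$ and $s\in[r,r+\rho]$ one gets $g(s)\ge g(r)-(M-1)(s-r)\ge g(r)\,(M+1)/(2M)>g(r)/2$ (note $\varphi''\ge0$ since $\varphi'$ is non-decreasing, so $M\ge1$ is well defined). Consequently
$$\log\frac{\varphi(r+\rho)}{\varphi(r)}=\int_r^{r+\rho}\frac{ds}{g(s)}\le \frac{2\rho}{g(r)}=\frac1M\le 1,$$
whence $\varphi(r+\rho)\le e\,\varphi(r)$ and therefore $\varphi(r+\rho)/\rho\le 2Me\,\varphi'(r)$. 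Feeding this back into the Cauchy estimate yields $u_\varphi(z)|f'(z)|=|f'(z)|/\varphi'(|z|)\le 2Me$ for all $|z|\ge r_\varphi$. I expect this radius selection, and the verification that $g$ cannot decay too fast on $[r,r+\rho]$, to be the only delicate part; everything else is the standard Cauchy-estimate machinery already employed for $J$.

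It remains to treat the compact region $|z|\le r_\varphi$, which is routine: there $u_\varphi(z)=1/\varphi'(r_\varphi)$, and a crude Cauchy estimate on the circle $|\zeta-z|=1$ gives $|f'(z)|\le\sup_{|\zeta|\le r_\varphi+1}|f(\zeta)|\le\varphi(r_\varphi+1)$, so that $u_\varphi(z)|f'(z)|\le\varphi(r_\varphi+1)/\varphi'(r_\varphi)$. Combining the two regions gives $\|f'\|_{u_\varphi}\le C$ with $C$ depending only on $\varphi$ and $M$, which proves the continuity of $D$ on $H^{\infty}_{w_\varphi}(\C)$ and, by the reduction above, on $H^{0}_{w_\varphi}(\C)$.
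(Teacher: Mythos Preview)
Your argument is correct. The Cauchy estimate with the adaptive radius $\rho=g(r)/(2M)$, the lower bound $g(s)\ge g(r)(M+1)/(2M)$ obtained from $g'=1-\varphi''\varphi/(\varphi')^2\ge 1-M$, and the resulting bound $\varphi(r+\rho)\le e\,\varphi(r)$ all check out, and the treatment of $|z|\le r_\varphi$ and the passage to $H^0$ via density of polynomials are fine.

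The paper proceeds differently: it does not carry out any estimate itself but quotes \cite[Lemma~21]{CP} to obtain the equivalence $M(f,r)=O(\varphi(r))\Leftrightarrow M(f',r)=O(\varphi'(r))$, then invokes the closed graph theorem to get continuity of $D$ on $H^\infty_{w_\varphi}$, and finally cites the argument of \cite[Lemma~2.1]{BeBoF} for the $H^0$-case. Your approach is more self-contained and yields an explicit operator-norm bound $\|D\|\le 2Me$ (plus a constant from the compact core), avoiding both the external lemma and the closed graph theorem; in effect you are reproving the relevant half of \cite[Lemma~21]{CP}. The paper's route is shorter on the page but opaque without the reference, whereas yours makes transparent exactly how the hypothesis $\sup\varphi''\varphi/(\varphi')^2<\infty$ controls the growth of $\varphi$ on intervals of length $\sim\varphi/\varphi'$.
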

\begin{proof}
An entire function $f$ belongs to $H^{\infty}_{w_\varphi}(\C)$ if and only if $M(f,r)=O(\varphi(r))$, when 
$r \rightarrow \infty$. 
It follows  from \cite[Lemma 21]{CP} that this is equivalent to $M(f',r)=O(\varphi'(r))$ for $r \rightarrow \infty$, i.e. $Df \in H^{\infty}_{u_\varphi}(\C)$. The closed graph theorem then implies that $D: H^{\infty}_{w_\varphi}(\C) \rightarrow H^{\infty}_{u_\varphi}(\C)$ is continuous. Finally, the argument of \cite[Lemma 2.1]{BeBoF} implies that $D: H^{0}_{w_\varphi}(\C) \rightarrow H^{0}_{u_\varphi}(\C)$ is also continuous.
\end{proof}

The condition on the function $\varphi$ in Proposition \ref{differoper} corresponds to the condition $K_p$ in \cite{CP}; see  (3.4) in that paper. The argument behind \cite[Lemma 21]{CP} can be traced back at least to \cite[Theorem 2.1]{Pa}. A remarkable result of Hardy is used in \cite{Pa} to exhibit examples of functions $\varphi$ that satisfy the assumption of Proposition \ref{differoper}. For example one can take $\varphi(r):= r^a (\log)^b \exp(cr^d + k (\log r)^m)$, for large $r$, where $c>0, d>0$ or $c=0, k>0, m>1)$.

\vspace{.2cm}

As a consequence of Propositions \ref{intgroper} and \ref{differoper} we obtain the following result,
which gives a Littlewood-Paley-type formula for entire functions and growth estimates of infinite order
and which  should be compared with \cite[Theorem 10]{CP}.
The result follows directly from  the proven continuity of $J$ and $D$, and from $J Df=f-f(0)$.

\begin{corollary}\label{littlepaley}
Let $\varphi$ be as in Proposition \ref{differoper}.
An entire function $f$ satisfies $f \in H^{\infty}_{w_\varphi}(\C)$ (resp. $f \in H^{0}_{w_\varphi}(\C)$) if and only if $f' \in H^{\infty}_{u_\varphi}(\C)$ (resp. $f' \in H^{0}_{u_\varphi}(\C)$). Moreover, there are constants $C,C',C''>0$ such that, for each $f \in H^{\infty}_{w_\varphi}(\C)$,
$$\Vert f'\Vert_{u_\varphi} \leq C \Vert f\Vert_{w_\varphi}$$ and
$$\Vert f\Vert_{w_\varphi} \leq C'|f(0)| + C'' \Vert f'\Vert_{u_\varphi}.$$
\end{corollary}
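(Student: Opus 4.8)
The plan is to derive Corollary \ref{littlepaley} as a formal consequence of the continuity of the integration and differentiation operators established in Propositions \ref{intgroper} and \ref{differoper}, together with the fundamental theorem of calculus for entire functions. The key algebraic identity is $JDf = f - f(0)$, valid for every $f \in H(\C)$, since $JDf(z) = \int_0^z f'(\zeta)\,d\zeta = f(z) - f(0)$.

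First I would prove the equivalence of the membership conditions. For the forward direction, suppose $f \in H^{\infty}_{w_\varphi}(\C)$. By Proposition \ref{differoper} the operator $D : H^{\infty}_{w_\varphi}(\C) \to H^{\infty}_{u_\varphi}(\C)$ is continuous, so $f' = Df \in H^{\infty}_{u_\varphi}(\C)$, and moreover $\Vert f'\Vert_{u_\varphi} = \Vert Df\Vert_{u_\varphi} \leq C\Vert f\Vert_{w_\varphi}$, where $C$ is the operator norm of $D$; this is precisely the first asserted estimate. For the converse, suppose $f' \in H^{\infty}_{u_\varphi}(\C)$. By Proposition \ref{intgroper} the operator $J : H^{\infty}_{u_\varphi}(\C) \to H^{\infty}_{w_\varphi}(\C)$ is continuous, so $Jf' \in H^{\infty}_{w_\varphi}(\C)$. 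Using the identity $Jf' = JDf = f - f(0)$ and noting that the constant $f(0)$ lies in $H^{\infty}_{w_\varphi}(\C)$ (since $w_\varphi$ is a weight, constants are trivially in the space), we conclude $f = Jf' + f(0) \in H^{\infty}_{w_\varphi}(\C)$.

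Next I would extract the second norm estimate from the same identity. Writing $\Vert f\Vert_{w_\varphi} = \Vert Jf' + f(0)\Vert_{w_\varphi} \leq \Vert f(0)\Vert_{w_\varphi} + \Vert Jf'\Vert_{w_\varphi}$, I bound the first term by $\Vert f(0)\Vert_{w_\varphi} = |f(0)|\,\sup_{z} w_\varphi(z) = |f(0)|\,w_\varphi(0) =: C'|f(0)|$ and the second by $\Vert Jf'\Vert_{w_\varphi} \leq \Vert J\Vert\,\Vert f'\Vert_{u_\varphi} =: C''\Vert f'\Vert_{u_\varphi}$, where $\Vert J\Vert$ is the operator norm of $J$ from Proposition \ref{intgroper}. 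This yields exactly the stated inequality $\Vert f\Vert_{w_\varphi} \leq C'|f(0)| + C''\Vert f'\Vert_{u_\varphi}$.

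Finally, the analogous statements for the little-oh spaces $H^0$ follow verbatim, since both Propositions \ref{intgroper} and \ref{differoper} assert continuity of $J$ and $D$ on the $H^0$-spaces as well, and the identity $JDf = f - f(0)$ is independent of the space. I expect no serious obstacle here: the corollary is essentially a packaging result, and all the analytic difficulty has already been absorbed into the two preceding propositions (in particular into the invocation of \cite[Lemma 21]{CP} for the $D$-continuity). The only minor point requiring care is verifying that constants belong to the relevant weighted spaces and bookkeeping the three constants $C, C', C''$ so that they match the roles of the operator norms of $D$ and $J$ and the value $w_\varphi(0)$.
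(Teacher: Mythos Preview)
Your proposal is correct and follows exactly the approach indicated in the paper, which simply states that the result follows directly from the continuity of $J$ and $D$ (Propositions \ref{intgroper} and \ref{differoper}) together with the identity $JDf = f - f(0)$. Your write-up is more detailed than the paper's one-line justification, but the underlying argument is identical.
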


We apply  the above mentioned results to the Volterra operator. 

\begin{theorem} \label{Volterracont}
Let $v$ be a weight and let $\varphi$ be as in Proposition \ref{differoper}.
The following conditions are equivalent for an entire function $g \in H(\C)$: 
\begin{itemize}
\item[(1)] $V_g : H^{\infty}_v(\C) \rightarrow H^{\infty}_{w_\varphi}(\C)$ is continuous.
\item[(2)] $V_g : H^{0}_v(\C) \rightarrow H^{0}_{w_\varphi}(\C)$ is continuous.
\item[(3)] $\sup_{|z| \geq r_\varphi} \frac{|g'(z)|}{\varphi'(|z|) \tilde{v}(z)} < \infty$.
\end{itemize}
\end{theorem}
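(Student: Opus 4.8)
The plan is to factor the Volterra operator as $V_g = J \circ M_{g'}$, where $M_{g'}$ is multiplication by $g'$ and $J$ is the integration operator, and to exploit the reverse identity $M_{g'} = D \circ V_g$ coming from the fundamental theorem of calculus (since $(V_g f)' = g' f$ and $(V_g f)(0) = 0$). Both factorizations are genuine identities of operators on $H(\C)$. This reduces the continuity of $V_g$ between the $w_\varphi$-spaces to the continuity of $M_{g'}$ into the $u_\varphi$-spaces, where Proposition \ref{contmult} applies directly with target weight $w = u_\varphi$ and symbol $h = g'$.

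First I would show that condition (1) is equivalent to continuity of $M_{g'} : H^{\infty}_v(\C) \to H^{\infty}_{u_\varphi}(\C)$. If $M_{g'}$ is continuous into $H^{\infty}_{u_\varphi}(\C)$, then Proposition \ref{intgroper} gives continuity of $J : H^{\infty}_{u_\varphi}(\C) \to H^{\infty}_{w_\varphi}(\C)$, so the composition $V_g = J M_{g'}$ is continuous, which is (1). Conversely, if (1) holds, Proposition \ref{differoper} gives continuity of $D : H^{\infty}_{w_\varphi}(\C) \to H^{\infty}_{u_\varphi}(\C)$, and since $M_{g'} = D V_g$, the operator $M_{g'}$ is continuous into $H^{\infty}_{u_\varphi}(\C)$. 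Running the same argument with every space replaced by its $H^0$-analogue, and using the corresponding $H^0$-statements in Propositions \ref{intgroper} and \ref{differoper}, shows that (2) is equivalent to continuity of $M_{g'} : H^{0}_v(\C) \to H^{0}_{u_\varphi}(\C)$.

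Next I would invoke Proposition \ref{contmult}, whose equivalences $(1)\Leftrightarrow(2)\Leftrightarrow(3)$ (for the multiplication operator) give that continuity of $M_{g'} : H^{\infty}_v(\C) \to H^{\infty}_{u_\varphi}(\C)$, continuity of $M_{g'} : H^{0}_v(\C) \to H^{0}_{u_\varphi}(\C)$, and the estimate $\sup_{z \in \C} u_\varphi(z)|g'(z)|/\tilde v(z) < \infty$ are all equivalent. It then remains only to match this last estimate to condition (3). By definition $u_\varphi(z) = 1/\varphi'(|z|)$ for $|z| \geq r_\varphi$, so on that region the supremum is exactly the one in (3). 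On the complementary disc $|z| \leq r_\varphi$ the weight $u_\varphi$ equals the constant $1/\varphi'(r_\varphi)$, while $g'$ is bounded and $\tilde v \geq v > 0$ is bounded below there by continuity, so this part of the supremum is automatically finite. Hence the global estimate holds if and only if (3) does, and the chain $(1)\Leftrightarrow(2)\Leftrightarrow(3)$ closes.

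The step requiring the most care is establishing the two factorization identities as operator equalities on $H(\C)$; once these are in place, the argument is essentially a matter of chaining Propositions \ref{intgroper}, \ref{differoper} and \ref{contmult}. A minor point worth noting is that $J D f = f - f(0)$ rather than $f$, but this does not affect anything, because we only ever compose $D$ after $V_g$ (whose output vanishes at the origin) and $J$ after $M_{g'}$, so no constant term is lost. The reduction of the weight condition to the range $|z| \geq r_\varphi$ is routine given the explicit piecewise form of $u_\varphi$.
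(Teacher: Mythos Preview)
Your proof is correct and follows essentially the same approach as the paper: both use the factorizations $V_g = J \circ M_{g'}$ and $M_{g'} = D \circ V_g$ together with Propositions \ref{intgroper}, \ref{differoper}, and \ref{contmult} to reduce continuity of $V_g$ to the multiplication-operator criterion. Your treatment is slightly more detailed in explicitly handling the compact disc $|z| \leq r_\varphi$ and the $JDf = f - f(0)$ issue, but the argument is otherwise the same.
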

\begin{proof}
Assume that condition (1) holds. By Proposition \ref{differoper}, the differentiation operator $D: H^{\infty}_{w_\varphi}(\C) \rightarrow H^{\infty}_{u_\varphi}(\C)$ is continuous. We can apply (1) and the identity $D V_g = M_{g'}$ to conclude that $M_{g'} : H^{\infty}_v(\C) \rightarrow H^{\infty}_{u_\varphi}(\C)$ is continuous. Now condition (3) follows from Proposition \ref{contmult}, since $u_{\varphi}(z)=1/\varphi'(|z|), |z| \geq r_\varphi$. Conversely, if condition (3) holds, the operator $M_{g'} : H^{\infty}_v(\C) \rightarrow H^{\infty}_{u_\varphi}(\C)$ is continuous by proposition \ref{contmult}. We apply Proposition \ref{intgroper} to get that $V_g=J \circ M_{g'}: H^{\infty}_v(\C) \rightarrow H^{\infty}_{w_\varphi}(\C)$ is continuous.

The equivalence of (2) and (3) is obtained in the same way, as a consequence of Propositions \ref{differoper}, \ref{intgroper} and \ref{contmult}.
\end{proof}

The corresponding statements on the compactness and weak compactness are as follows.
The proof is very similar to the one of Theorem \ref{Volterracont}, and it is a consequence of  Propositions \ref{differoper}, \ref{intgroper} and \ref{compmult}.

\begin{theorem} \label{Volterracomp}
Let $v$ be a weight and let  $\varphi$ be as in Proposition \ref{differoper}. 
The following conditions are equivalent for an entire function $g \in H(\C)$: 
\begin{itemize}
\item[(1)] $V_g : H^{\infty}_v(\C) \rightarrow H^{\infty}_{w_\varphi}(\C)$ is compact.
\item[(2)] $V_g : H^{0}_v(\C) \rightarrow H^{0}_{w_\varphi}(\C)$ is compact.
\item[(3)] $\lim_{|z| \rightarrow \infty} \frac{|g'(z)|}{\varphi'(|z|) \tilde{v}(z)} = 0$.
\end{itemize}
If, moreover every discrete sequence in $\C$ has a subsequence that is interpolating for $H^{\infty}_{\tilde v}(\C)$, these three conditions are also equivalent to
\begin{itemize}
\item[(4)] $V_g : H^{\infty}_v(\C) \rightarrow H^{\infty}_{w_\varphi}(\C)$ is weakly compact.
\item[(5)] $V_g : H^{0}_v(\C) \rightarrow H^{0}_{w_\varphi}(\C)$ is weakly compact.
\end{itemize}
\end{theorem}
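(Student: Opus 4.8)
The plan is to mirror the proof of Theorem \ref{Volterracont} line by line, replacing every occurrence of "continuous" by "compact" (respectively "weakly compact") and invoking Proposition \ref{compmult} in place of Proposition \ref{contmult}. The entire argument rests on a standard ideal property: the composition of a compact (resp. weakly compact) operator with a continuous one, in either order, is again compact (resp. weakly compact). This lets me transport (weak) compactness across the factorization $V_g = J \circ M_{g'}$ and the identity $D V_g = M_{g'}$, exactly the two structural facts exploited in the continuity theorem.

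First I would prove the equivalence of (1), (2) and (3). Assuming (1), I use that $D : H^{\infty}_{w_\varphi}(\C) \rightarrow H^{\infty}_{u_\varphi}(\C)$ is continuous (Proposition \ref{differoper}), so the composition $D V_g = M_{g'} : H^{\infty}_v(\C) \rightarrow H^{\infty}_{u_\varphi}(\C)$ is compact. Proposition \ref{compmult}, together with the identification $u_\varphi(z) = 1/\varphi'(|z|)$ for $|z| \geq r_\varphi$, then yields condition (3). Conversely, if (3) holds, the same identification gives $\lim_{|z| \rightarrow \infty} u_\varphi(z)|g'(z)|/\tilde{v}(z) = 0$, so $M_{g'} : H^{\infty}_v(\C) \rightarrow H^{\infty}_{u_\varphi}(\C)$ is compact by Proposition \ref{compmult}; composing on the left with the continuous operator $J : H^{\infty}_{u_\varphi}(\C) \rightarrow H^{\infty}_{w_\varphi}(\C)$ from Proposition \ref{intgroper} shows $V_g = J \circ M_{g'}$ is compact, which is (1). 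The equivalence of (2) and (3) is obtained identically, now using the $H^0$-versions of Propositions \ref{differoper} and \ref{intgroper} and the corresponding $H^0$-equivalence in Proposition \ref{compmult}.

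For the weak compactness statements I would first note that (1) trivially implies (4) and (2) implies (5), since compact operators are weakly compact. For the reverse implications I invoke the extra interpolation hypothesis. If (4) holds, then $D V_g = M_{g'} : H^{\infty}_v(\C) \rightarrow H^{\infty}_{u_\varphi}(\C)$ is weakly compact, being the composition of the weakly compact $V_g$ with the continuous $D$. Under the interpolation assumption, Proposition \ref{compmult} equates weak compactness of $M_{g'}$ with its compactness, hence with condition (3); this gives $(4) \Rightarrow (3)$ and closes the cycle. The implication $(5) \Rightarrow (3)$ is entirely analogous on the $H^0$-spaces.

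I do not anticipate a genuine obstacle, as all the analytic content has been isolated in Propositions \ref{differoper}, \ref{intgroper} and \ref{compmult}; what remains is purely formal operator-ideal bookkeeping. The only point demanding slight care is the role of the interpolation hypothesis: Proposition \ref{compmult} identifies weak compactness with compactness of the multiplication operator \emph{only} under that assumption, so the equivalence of (4) and (5) with (1)--(3) must be asserted precisely in that regime, in accordance with the statement.
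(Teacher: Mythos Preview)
Your proposal is correct and matches the paper's own approach almost verbatim: the paper simply remarks that the proof is very similar to that of Theorem \ref{Volterracont} and is a consequence of Propositions \ref{differoper}, \ref{intgroper} and \ref{compmult}, which is exactly the factorization-plus-ideal-property argument you outline.
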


Let us next reformulate the above results in terms of two quite arbitrary weights $v$ and $w$,
however, assuming some additional properties about the latter.

\begin{theorem} 
\label{th3.10}
Let $v$ and $w$ be weights, and assume that  for some constants $R > 0$, $C> 0$ and $0 < \delta \leq 1$, the following hold for $w$ on the 
interval  $]R,  \infty[$: $(i)$ $w$ is of smoothness $C^2$, $(ii)$
the function $|w'(r)| r^{1 + \delta}$ is non-increasing , $(iii)$  $ - \frac{ w(r) w''(r)}{w'(r)^2} \leq C $. The following conditions are equivalent for an entire function $g \in H(\C)$: 
\begin{itemize}
\item[(1)] $V_g : H^{\infty}_v(\C) \rightarrow H^{\infty}_{w}(\C)$ is continuous.
\item[(2)] $V_g : H^{0}_v(\C) \rightarrow H^{0}_{w}(\C)$ is continuous.
\item[(3)] $\sup_{|z| \geq R} \frac{w(z)^2 |g'(z)|}{w'(z) \tilde{v}(z)} 
< \infty$.
\end{itemize}
\end{theorem}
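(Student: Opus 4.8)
The plan is to reduce Theorem~\ref{th3.10} to Theorem~\ref{Volterracont} by taking as profile function the reciprocal weight $\varphi := 1/w$. Since altering $w$ on the compact interval $[0,R]$ changes neither the spaces $H^\infty_w(\C)$, $H^0_w(\C)$ nor the quantity in (3) (only the norm, up to equivalence), I may first smooth $w$ on $[0,R]$ and then set $\varphi(r):=1/w(r)$. Then $\varphi$ is continuous, positive and non-decreasing on $[0,\infty[$, of class $C^2$ on $]R,\infty[$ by $(i)$, and $w_\varphi=1/\varphi=w$, so the target spaces in the two theorems literally coincide. A direct computation gives $\varphi'(r)=-w'(r)/w(r)^2=|w'(r)|/w(r)^2$, whence $u_\varphi(z)=1/\varphi'(|z|)=w(z)^2/|w'(z)|$ for $|z|\ge R$, and therefore the quantity $|g'(z)|/(\varphi'(|z|)\tilde v(z))$ of condition (3) of Theorem~\ref{Volterracont} equals $w(z)^2|g'(z)|/(|w'(z)|\tilde v(z))$, which is exactly the quantity in condition (3) here. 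Thus, once $\varphi$ is shown to satisfy the standing hypotheses of Section~\ref{volterra} together with those of Proposition~\ref{differoper}, all three equivalences follow at once from Theorem~\ref{Volterracont}.

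Two of the three required properties of $\varphi$ are delivered by short computations. For the $K_p$-type bound, differentiating twice yields the identity
\[
\frac{\varphi''(r)\,\varphi(r)}{(\varphi'(r))^2}=2-\frac{w(r)\,w''(r)}{w'(r)^2},
\]
so that assumption $(iii)$ is precisely equivalent to $\sup_{r\ge R}\varphi''(r)\varphi(r)/(\varphi'(r))^2<\infty$, the hypothesis of Proposition~\ref{differoper}. For the growth and positivity of $\varphi'$, condition $(ii)$ forces $|w'|$ to be non-increasing, and from $w(r)=\int_r^\infty|w'(s)|\,ds\le |w'(r)|r^{1+\delta}\int_r^\infty s^{-(1+\delta)}\,ds$ one gets the pointwise estimate $|w'(r)|\ge \delta\,w(r)/r$. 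Hence $\varphi'(r)=|w'(r)|/w(r)^2\ge \delta/(r\,w(r))$, and since $w$ decays faster than any polynomial this gives $r^n=O(\varphi'(r))$ for every $n$; it also gives $\varphi'(R)>0$, once one notes that a zero of $w'$ would propagate by $(ii)$ and contradict $w(r)\to0$.

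The remaining — and main — point is that $\varphi'$ must be non-decreasing on $[R,\infty[$, equivalently that $u_\varphi=w^2/|w'|$ is a genuine (non-increasing) weight, equivalently $w(r)w''(r)\le 2w'(r)^2$. This is the one step not produced by a one-line computation: both $(ii)$ (which yields $w''\ge (1+\delta)|w'|/r\ge 0$, i.e.\ convexity) and $(iii)$ bound $w''/(w')^2$ from the side needed for the $K_p$ estimate, but the inequality $w w''\le 2(w')^2$ is of the opposite type and does \emph{not} follow formally; indeed one can manufacture a $w$ meeting $(i)$--$(iii)$ for which $\varphi'$ dips. Here the precise shape of $(ii)$, with exponent $1+\delta$ and the restriction $\delta\le 1$, is essential, and I expect the real work to lie exactly here. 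The plan is to exploit $(ii)$ to show that $\varphi'$ is \emph{comparable} to a non-decreasing function — its least non-decreasing majorant $\sigma(r):=\sup_{R\le s\le r}\varphi'(s)$ — proving $\varphi'\le\sigma\le M\varphi'$ with $M$ controlled through $\delta$, and then replacing $\varphi$ by $\tilde\varphi(r):=\varphi(R)+\int_R^r\sigma$. Since $\sigma\asymp\varphi'$, the reciprocals satisfy $1/\tilde\varphi\asymp w$ and $1/\sigma\asymp u_\varphi$, so spaces and quantity (3) are preserved up to equivalence while $\tilde\varphi$ now meets all standing assumptions. Alternatively, one can bypass monotonicity altogether by re-running Propositions~\ref{intgroper} and \ref{differoper} for the pair $(w,u_\varphi)$ directly: continuity of $J:H^\infty_{u_\varphi}(\C)\to H^\infty_w(\C)$ follows from the exact primitive $\int_R^{|z|}1/u_\varphi(s)\,ds=1/w(|z|)-1/w(R)$, which needs no monotonicity, and continuity of $D$ from \cite[Lemma~21]{CP} under the $K_p$ condition $(iii)$. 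Either way the crux is controlling the comparability constants via $(ii)$; the rest is bookkeeping around Theorem~\ref{Volterracont}.
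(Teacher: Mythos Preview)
Your reduction is exactly the paper's: set $\varphi:=1/w$, observe $w_\varphi=w$ and that condition~(3) becomes condition~(3) of Theorem~\ref{Volterracont}, then verify the hypotheses needed there. The paper carries out the same two computations you do: the identity $\varphi''\varphi/(\varphi')^2 = 2 - ww''/(w')^2$ to obtain the bound of Proposition~\ref{differoper} from~$(iii)$, and the integral estimate $w(r)=\int_r^\infty |w'(t)|\,dt \le C|w'(r)|\,r$ from~$(ii)$, combined with $w(r)\le C_n r^{-n}$, to get $\varphi'(r)\ge C_n' r^{n-1}$.

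Where you diverge is in worrying about the standing hypothesis that $\varphi'$ be non-decreasing. The paper's proof simply does not verify this: after the two computations above it stops. Your observation that $(i)$--$(iii)$ only yield \emph{lower} bounds on $ww''/(w')^2$, whereas monotonicity of $\varphi'$ is the \emph{upper} bound $ww''\le 2(w')^2$, is correct; so in strict terms you have found a point the paper glosses over. Of your two proposed remedies, the second is the right one and essentially free: the proof of Proposition~\ref{intgroper} uses only the identity $\int_R^{|z|}\varphi'(s)\,ds=\varphi(|z|)-\varphi(R)$, Proposition~\ref{differoper} rests on the $K_p$ bound via \cite[Lemma~21]{CP}, and the implications $(3)\Leftrightarrow(1)$ in Proposition~\ref{contmult} never use that the target weight is non-increasing. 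Your first remedy (replacing $\varphi'$ by its running maximum $\sigma$ and claiming $\sigma\le M\varphi'$) is not justified as stated and, as you yourself concede, would require real work; drop it and keep the direct argument.
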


Notice that for a weight $w$ satisfying all assumptions of this theorem, the quantity on the left hand side of $(iii)$ may be negative, as it
is  for example in the case $w(r) = e^{-r}$. 

\begin{proof}
It is enough to show that the function $\varphi (z): = 1/ w(z)$ satisfies the assumptions 
of Theorem \ref{Volterracont}, since $w_\varphi =w$ and since the conditions (3) in Theorems
\ref{Volterracont} and \ref{th3.10} are the same. 

First, calculating the derivatives yields
$
\frac{\varphi'' \varphi}{(\varphi')^2} = 2 - \frac{w'' w}{(w')^2} ,
$
hence, the corresponding assumption of Theorem \ref{Volterracont} follows from $(iii)$. 
Second, in view of the beginning of this section, we should 
show that $\varphi'(r) \geq C_n r^n$ for all $n$, $r \geq R$. We 
have by  assumption $(ii)$
\begin{eqnarray}
& & w(r) = \int_r^\infty |w'(t)| dt = \int_r^\infty |w'(t)|t^{1 + \delta} t^{-1-\delta} dt
\nonumber \\
& \leq &  |w'(r)|r ^{1 + \delta} \int_r^\infty t^{-1-\delta} dt \leq C |w'(r)|r  
\ \ \ \forall r \geq R.
\label{3.89}
\end{eqnarray} 
Since $w $ is a weight, we have $w(r) \leq C_nr^{-n}$, so this and \eqref{3.89} imply
$$
\varphi'(r) = - \frac{w'(r)}{w(r)^2 } =  \frac1{w(r)} \frac{|w'(r)|}{w(r)} 
\geq C_n' r^n r^{-1 }  = C_n' r^{n -1}
$$
for arbitrary $n \geq 1$, $r \geq R$. This proves the claim. 
\end{proof}

The above argument and Theorem \ref{Volterracomp} lead also to the following
statements. 

\begin{theorem}
\label{th3.5a}
Let $v$  and $w$ be weights and assume that $w$ satisfies the conditions $(i)$--$(iii)$ of 
Theorem \ref{th3.10}. The following conditions are equivalent for an entire function $g \in H(\C)$: 
\begin{itemize}
\item[(1)] $V_g : H^{\infty}_v(\C) \rightarrow H^{\infty}_{w}(\C)$ is compact.
\item[(2)] $V_g : H^{0}_v(\C) \rightarrow H^{0}_{w}(\C)$ is compact.
\item[(3)] $\lim_{|z| \rightarrow \infty} \frac{w(z)^2 |g'(z)|}{w'(z) \tilde{v}(z)} = 0$.
\end{itemize}
If, moreover every discrete sequence in $\C$ has a subsequence that is interpolating for $H^{\infty}_{\tilde v}(\C)$, these three  conditions are also equivalent to
\begin{itemize}
\item[(4)] $V_g : H^{\infty}_v(\C) \rightarrow H^{\infty}_{w}(\C)$ is weakly compact.
\item[(5)] $V_g : H^{0}_v(\C) \rightarrow H^{0}_{w}(\C)$ is weakly compact.
\end{itemize}
\end{theorem}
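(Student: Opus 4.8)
The plan is to reduce Theorem \ref{th3.5a} to Theorem \ref{Volterracomp} by exactly the device already used to deduce Theorem \ref{th3.10} from Theorem \ref{Volterracont}. Namely, I would set $\varphi(z) := 1/w(z)$ and check that, under hypotheses $(i)$--$(iii)$ on $w$, the function $\varphi$ satisfies all the standing assumptions of this section (continuity, $C^1$-smoothness with non-decreasing positive derivative, and $r^n = O(\varphi'(r))$) together with the extra $C^2$-condition $\sup_{r \geq R}\varphi''(r)\varphi(r)/\varphi'(r)^2 < \infty$ required in Proposition \ref{differoper}, and hence imposed in Theorem \ref{Volterracomp}.

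The key observation is that this verification has already been carried out verbatim in the proof of Theorem \ref{th3.10}: the identity $\varphi''\varphi/(\varphi')^2 = 2 - w''w/(w')^2$ turns hypothesis $(iii)$ into the required bound on the second-derivative quotient, and the chain of estimates in \eqref{3.89}, using the monotonicity hypothesis $(ii)$ for $|w'(r)|r^{1+\delta}$ together with $w(r) \leq C_n r^{-n}$, yields $\varphi'(r) \geq C_n' r^{n-1}$ for every $n$, i.e. $r^n = O(\varphi'(r))$. Since I may quote that proof, I would simply invoke it rather than repeat the calculation. With $\varphi$ thus admissible one has $w_\varphi = 1/\varphi = w$, so conditions $(1)$, $(2)$, $(4)$ and $(5)$ of Theorem \ref{th3.5a} are literally conditions $(1)$, $(2)$, $(4)$ and $(5)$ of Theorem \ref{Volterracomp}.

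It then remains only to match the limit condition. Because $\varphi' = -w'/w^2 = |w'|/w^2$, condition $(3)$ of Theorem \ref{Volterracomp}, namely $\lim_{|z|\to\infty} |g'(z)|/(\varphi'(|z|)\tilde v(z)) = 0$, becomes condition $(3)$ of Theorem \ref{th3.5a} after the substitution $1/\varphi' = w^2/|w'|$; this is the same identification already noted for the supremum versions in Theorem \ref{th3.10}. Finally, the additional hypothesis guaranteeing the equivalence with weak compactness---that every discrete sequence in $\C$ has a subsequence interpolating for $H^\infty_{\tilde v}(\C)$---is phrased identically in both theorems and concerns only $v$, so it transfers unchanged and the full equivalence of $(1)$--$(5)$ follows from Theorem \ref{Volterracomp}.

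I do not expect a genuine obstacle: the analytic content, namely the admissibility of $\varphi = 1/w$, is entirely reused from Theorem \ref{th3.10}, and the only care needed is the bookkeeping of the sign in passing from $\varphi'$ to $|w'|/w^2$ and the confirmation that it is the limit condition, rather than the boundedness condition, that is being transported. Since the hypotheses of Theorem \ref{Volterracomp} are structural conditions on $\varphi$ independent of the symbol $g$, they are secured already by the first step, and nothing further is required.
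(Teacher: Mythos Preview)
Your proposal is correct and follows exactly the paper's own approach: the paper does not write out a separate proof for Theorem~\ref{th3.5a} but simply remarks that the argument of Theorem~\ref{th3.10} (verifying that $\varphi=1/w$ is admissible) together with Theorem~\ref{Volterracomp} yields the result. Your outline spells out precisely this reduction, including the identification $1/\varphi' = w^2/|w'|$ and the unchanged interpolation hypothesis on~$v$.
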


We next formulate special cases of the above theorems.  For the proof
of the next result it suffices
to realize that the function $\varphi(r):= 1/w(r):= \exp(\alpha r^p) $ satisfies all the assumptions 
of Theorem \ref{Volterracont} and \ref{Volterracomp}, respectively.

\begin{corollary}\label{corexp1}
Let $v$ be a weight and let $w (r):= \exp(- \alpha r^p)$, where $ \alpha>0, p > 0$
are constants. The following conditions are equivalent for $g \in H(\C)$:
\begin{itemize}
\item[(1)] $V_g : H^{\infty}_v(\C) \rightarrow H^{\infty}_{w}(\C)$ is continuous.
\item[(2)] $V_g : H^{0}_v(\C) \rightarrow H^{0}_{w}(\C)$ is continuous.
\item[(3)] There exist a constant $C>0$ such that $|g'(z)| \leq C |z|^{p-1} \exp(\alpha |z|^p) \tilde{v}(|z|) $ for all  $z \in \C$, $|z| \geq 1$.
\end{itemize}

Moreover, the following conditions are also mutually equivalent for $g \in H(\C)$:
\begin{itemize}
\item[(1)] $V_g : H^{\infty}_v(\C) \rightarrow H^{\infty}_{w}(\C)$ is compact.
\item[(2)] $V_g : H^{0}_v(\C) \rightarrow H^{0}_{w}(\C)$ is compact.
\item[(3)] 
$|g'(z)| 
= o \big( |z|^{p-1} \exp(\alpha |z|^p) \tilde{v}(|z|) \big) $ as $|z| \to \infty$.
\end{itemize}
\end{corollary}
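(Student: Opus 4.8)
The plan is to specialize Theorems \ref{Volterracont} and \ref{Volterracomp} to the concrete weight $w(r) = \exp(-\alpha r^p)$ by checking that its reciprocal $\varphi(r) := 1/w(r) = \exp(\alpha r^p)$ fits the standing hypotheses of Section \ref{volterra} together with the extra regularity required in Proposition \ref{differoper}. Once this is done, the two blocks of equivalences are read off directly from those theorems, since $w_\varphi = w$, and it only remains to rewrite condition (3) in the explicit form stated in the corollary.

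First I would record the derivatives
$$\varphi'(r) = \alpha p\, r^{p-1} e^{\alpha r^p}, \qquad \varphi''(r) = \alpha p\, r^{p-2} e^{\alpha r^p}\big((p-1) + \alpha p\, r^p\big),$$
valid for $r > 0$. From the second expression one sees that $\varphi''(r) > 0$ as soon as $\alpha p\, r^p > 1-p$, so $\varphi'$ is non-decreasing on $[r_\varphi, \infty[$ for any sufficiently large $r_\varphi$; choosing such an $r_\varphi > 0$ also guarantees $\varphi'(r_\varphi) > 0$. Since the exponential factor dominates every power, $r^n = O(\varphi'(r))$ as $r \to \infty$ for each $n \in \N$, so $\varphi$ meets the general assumptions of the section and $w_\varphi = w$ is indeed a weight.

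The one genuinely computational point is the extra hypothesis of Proposition \ref{differoper}, namely that $\varphi''\varphi/(\varphi')^2$ be bounded on $[r_\varphi, \infty[$. A direct substitution gives
$$\frac{\varphi''(r)\,\varphi(r)}{\varphi'(r)^2} = \frac{(p-1) + \alpha p\, r^p}{\alpha p\, r^p} = 1 + \frac{p-1}{\alpha p\, r^p},$$
which converges to $1$ as $r \to \infty$ and is therefore bounded on $[r_\varphi, \infty[$. Hence $\varphi$ satisfies all the hypotheses of Theorems \ref{Volterracont} and \ref{Volterracomp}. I do not expect this step to present any real obstacle; the only mild care needed concerns the range $0 < p < 1$, where the monotonicity of $\varphi'$ and the positivity of $\varphi''$ hold only for large $r$, which is precisely why the freedom to enlarge $r_\varphi$ is used.

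Finally, with $\varphi'(|z|) = \alpha p\, |z|^{p-1} e^{\alpha |z|^p}$, condition (3) of Theorem \ref{Volterracont} reads
$$\sup_{|z| \geq r_\varphi} \frac{|g'(z)|}{\alpha p\, |z|^{p-1} e^{\alpha |z|^p}\, \tilde v(z)} < \infty,$$
and, absorbing the constant $\alpha p$, this is exactly the existence of $C>0$ with $|g'(z)| \leq C |z|^{p-1}\exp(\alpha|z|^p)\tilde v(|z|)$. Since $g'$ is entire and the comparison function is bounded below by a positive constant on the compact annulus $1 \leq |z| \leq r_\varphi$, the estimate extends from $|z| \geq r_\varphi$ to $|z| \geq 1$ at the cost of enlarging $C$, which yields condition (3) of the corollary. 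The $o$-version follows verbatim from condition (3) of Theorem \ref{Volterracomp}, completing the proof.
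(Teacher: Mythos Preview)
Your proof is correct and follows exactly the approach indicated in the paper: verify that $\varphi(r)=\exp(\alpha r^p)$ satisfies the standing hypotheses of Section~\ref{volterra} and the additional condition of Proposition~\ref{differoper}, then read off the equivalences from Theorems~\ref{Volterracont} and~\ref{Volterracomp}. You have supplied the explicit calculations that the paper leaves to the reader, including the care needed for $0<p<1$ and the passage between $|z|\geq r_\varphi$ and $|z|\geq 1$.
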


Let us finally consider weights, which are explicitly given in the form $v(r) = e^{-\phi(r)}$.
We make the assumptions that  the function $\phi: [0, \infty[ \to [0, \infty[$ is of
smoothness $C^2$, $\phi' > 0$ on $[0, \infty[$, and
\begin{equation}
r  \phi'(r) \to \infty  \  \ \mbox{as} \ r \to \infty \, ,  \ \mbox{and} \ \ \phi''(r) \leq (1 -\delta) \phi'(r)^2  \ \ \forall r  \in  [R_0, \infty[ , \label{3.43a}
\end{equation}
for some contants $R_0 \geq 0$ and  $0< \delta <1$.

\begin{remark}
The first condition means that $\phi$ grows faster than logarithmically. 
The second
condition is a monotonicity condition for $\phi''$. It is not satisfied for example by 
$\sin r$. Another such function with positive $\phi''$ can be constructed by
setting $\phi''(r) = e^{-r}$ on $[0, \infty[ \setminus K$, where $K $ is a compact set
containing some small neighbourhoods of the points $1,2,3, \ldots$ such that its measure
$|K|$  satisfies 
$|K| \leq 1/100$, and requiring
$\phi''(n) = 10$ for $n \in \N$, $\phi''(r) \leq 10 $ for all $r$. Then, defining $\phi'(r) =
\int_0^r \phi''(t)dt$ we have $\phi'(r) \leq 2$ for all $r$ and the second
inequality \eqref{3.43a} fails. 

On the other hand, there does not exist a positive function $\phi''$ 
with  $\phi'' (r) \geq c \phi'(r)^2$  ($c > 0$ constant) on an unbounded interval. 
This follows from the theory of the solutions of the nonlinear differential equation 
(think $g$ as $\phi'$)
\begin{equation}
g' = c g^2  .\label{3.43b}
\end{equation}
Any solution of \eqref{3.43b} with positive value $g(t) $ for some $t$, blows up 
at some finite value of $r$, i.e. it ceases to exist globally in $r$. For example, given $b >0$, the solution of \eqref{3.43b}
with $g(0) = b $ is $g(r) = b / (1 - cbr)$, which is defined only on the interval 
$[0, 1/(cb)[$. In view of this remark, the second condition in \eqref{3.43a} is indeed
quite mild. 
\end{remark}

These weights seem very useful for the theory presented in this paper, as 
we can see  from the following lemma.

\begin{lemma}
\label{lem3.5}
Let $\phi$ satisfy the conditions around \eqref{3.43a}. Then, 
the expression $v(r) = e^{-\phi(r)}$ is a weight and it satisfies $(ii)$ and $(iii)$ of  Theorem \ref{th3.10}. In addition, if there exists constants $r_0 , c  >0$ such that the inequality
\begin{equation}
\phi'(r) + r \phi''(r) \geq \frac{c}{r}
\label{3.43} 
\end{equation}
holds for $\phi$, for all $r \geq r_0$, then $v$ is also essential.
\end{lemma}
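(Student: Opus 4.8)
The plan is to treat the three assertions for $v = e^{-\phi}$ one at a time; the first two reduce to elementary differentiations, while the essentialness is the real content and is the only place where \eqref{3.43} is needed. First I would check that $v$ is a weight: continuity and positivity are clear, and $v' = -\phi' e^{-\phi} < 0$ (since $\phi'>0$) shows $v$ is non-increasing. For the decay $r^m v(r) \to 0$, the hypothesis $r\phi'(r) \to \infty$ gives, for each $M>0$, an estimate $\phi'(r) \ge M/r$ for large $r$, hence $\phi(r) \ge M\log r + O(1)$; thus $\phi(r)/\log r \to \infty$ and $r^m e^{-\phi(r)} \to 0$ for every $m$. A direct computation then settles $(iii)$:
\[
-\frac{v(r) v''(r)}{v'(r)^2} = \frac{\phi''(r)}{\phi'(r)^2} - 1 \le (1-\delta) - 1 = -\delta ,
\]
using $\phi'' \le (1-\delta)(\phi')^2$; so $(iii)$ holds (with any $C>0$), in line with the remark that the left side of $(iii)$ may be negative. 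Smoothness $(i)$ is immediate from $\phi \in C^2$.

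For $(ii)$ I would differentiate the logarithm $L(r) := \log\big(|v'(r)| r^{1+\delta}\big) = \log\phi'(r) - \phi(r) + (1+\delta)\log r$, obtaining
\[
L'(r) = \frac{\phi''(r)}{\phi'(r)} - \phi'(r) + \frac{1+\delta}{r} \le -\delta\,\phi'(r) + \frac{1+\delta}{r},
\]
again by $\phi'' \le (1-\delta)(\phi')^2$. Since $r\phi'(r) \to \infty$, the right-hand side is $\le 0$ as soon as $r\phi'(r) \ge (1+\delta)/\delta$, i.e. for all $r \ge R$ with $R$ large; hence $|v'(r)| r^{1+\delta}$ is non-increasing on $]R,\infty[$, which is $(ii)$.

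The main work is essentialness under \eqref{3.43}. Set $F(r) := r\phi'(r)$; then $F'(r) = \phi'(r) + r\phi''(r) \ge c/r > 0$ for $r \ge r_0$, so $F$ is strictly increasing with $F(r) \to \infty$, and for each large integer $n$ there is a unique $\rho_n \ge r_0$ with $F(\rho_n) = n$. Minimizing $\psi_n(r) := \phi(r) - n\log r$, whose derivative is $(F(r)-n)/r$, identifies the monomial norm as $\|z^n\|_v = \sup_r r^n e^{-\phi(r)} = \rho_n^n e^{-\phi(\rho_n)}$. Given $z_0$ with $s := |z_0|$ large, I would choose $n := \lfloor s\phi'(s) \rfloor$, so that $\rho_n \le s < \rho_{n+1}$, and set $f_0 := z^n / \|z^n\|_v$, which has $\|f_0\|_v = 1$ and hence $|f_0(z)| \le 1/v(z)$ everywhere. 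Then
\[
v(z_0)\,|f_0(z_0)| = \frac{s^n e^{-\phi(s)}}{\rho_n^n e^{-\phi(\rho_n)}} = \exp\big(-(\psi_n(s) - \psi_n(\rho_n))\big),
\]
and the exponent is $\le 0$ since $\rho_n$ is the minimizer. The decisive step is the uniform bound $\psi_n(s) - \psi_n(\rho_n) \le 1/c$, which I would prove by estimating
\[
\psi_n(s) - \psi_n(\rho_n) = \int_{\rho_n}^{s} \frac{F(t)-n}{t}\,dt < \int_{\rho_n}^{s} \frac{dt}{t} \le \int_{\rho_n}^{\rho_{n+1}} \frac{dt}{t},
\]
where $0 \le F(t)-n < 1$ on $[\rho_n,s] \subset [\rho_n,\rho_{n+1})$, and then converting the last integral via \eqref{3.43}: since $dt/t \le F'(t)\,dt/c$,
\[
\int_{\rho_n}^{\rho_{n+1}} \frac{dt}{t} \le \frac1c\big(F(\rho_{n+1}) - F(\rho_n)\big) = \frac1c .
\]
Thus $v(z_0)|f_0(z_0)| \ge e^{-1/c}$, so $\tilde v(z_0) = \big(\sup\{|f(z_0)| : \|f\|_v \le 1\}\big)^{-1} \le e^{1/c} v(z_0)$ for all $|z_0| \ge R'$. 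On the compact set $|z_0| \le R'$ the ratio $\tilde v/v$ is bounded by continuity and positivity of $\tilde v$ and $v$, and since $v \le \tilde v$ always, we conclude $v \le \tilde v \le C v$.

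The hard part, and the only place the full strength of \eqref{3.43} enters, is controlling the logarithmic spacing $\log(\rho_{n+1}/\rho_n)$ of consecutive extremal radii: the bound $F'(r) = \phi'(r) + r\phi''(r) \ge c/r$ is exactly what turns the unit increment $F(\rho_{n+1}) - F(\rho_n) = 1$ into $\log(\rho_{n+1}/\rho_n) \le 1/c$, and thereby into a $z_0$-independent lower bound for $v(z_0)|f_0(z_0)|$. Everything else is bookkeeping — verifying $\rho_n$ is a global (not merely local) minimizer of $\psi_n$ for large $n$, and reducing the claim to large $|z_0|$.
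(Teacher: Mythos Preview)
Your treatment of the first two assertions---that $v=e^{-\phi}$ is a weight and satisfies $(ii)$ and $(iii)$ of Theorem~\ref{th3.10}---matches the paper's proof essentially line for line (you differentiate $\log(|v'|r^{1+\delta})$ where the paper differentiates $|v'|r^{1+\delta}$ itself, a cosmetic difference).

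For the essentialness assertion your argument is correct but follows a genuinely different route. The paper invokes \cite[Lemma~1]{Bor}, which supplies a single entire function $g$ with $c_1\le g(r)e^{-\sigma(\log r)}\le c_2$ whenever $\sigma$ is $C^2$ with $\sigma'(0)=1$ and $\sigma''\ge c_3>0$; it then sets $\sigma(r)=\phi(Re^r)$ for an $R$ with $R\phi'(R)=1$, observes that $\sigma''(r)=Re^r\big(\phi'(Re^r)+Re^r\phi''(Re^r)\big)\ge c$ is exactly \eqref{3.43}, and rescales $g$ to witness essentialness globally. You instead test against normalized monomials $z^n/\|z^n\|_v$, locate the extremal radii $\rho_n$ via $F(r)=r\phi'(r)$, and use \eqref{3.43} in the form $F'(r)\ge c/r$ to bound the logarithmic gap $\log(\rho_{n+1}/\rho_n)\le 1/c$, which converts to the uniform lower bound $v(z_0)|f_0(z_0)|\ge e^{-1/c}$. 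Both approaches read \eqref{3.43} the same way after the substitution $t=\log r$: it says the function $r\mapsto\phi(e^r)$ has second derivative $\ge c$. What the paper's approach buys is a single comparison function valid at every point (at the cost of quoting an external construction); what yours buys is a fully self-contained, elementary argument that makes transparent why \eqref{3.43} is precisely the hypothesis needed---it is the condition controlling the spacing of the monomial maxima.
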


\begin{proof}
From the first relation in \eqref{3.43a} we get, for all $n \in \N$, for large enough $R > 0$,
$$
\phi(r) \geq \int\limits_R^r \phi'(t) dt \geq \int\limits_R^r \frac{n}{r } dt 
= n \log r  - n \log R, 
$$
hence, $v(r) \leq C r^{-n}$ and $v$ is a weight. Moreover, 
\begin{eqnarray}
& & \frac{d}{dr	} (- v'(r) r^{1+ \delta} ) = \Big( (1 + \delta)r^\delta \phi' (r) + r^{1+ \delta}
\big(\phi''(r) - \phi'(r)^2 \big) \Big)  e^{-\phi(r)}
\nonumber \\
& \leq &  \Big( (1 + \delta)r^\delta \phi' (r) - \delta r^{1+ \delta}
\phi'(r)^2  \Big)  e^{-\phi(r)}
 =  \phi' (r) r^\delta \Big( (1 + \delta) -  \delta r \phi'(r) \Big)   e^{-\phi(r)}  < 0 \nonumber
\end{eqnarray}
for large enough $r$, by both relations \eqref{3.43a}, since $r \phi'(r) \to \infty$ as $r \to
\infty$.  Hence, $v$ satisfies $(ii)$ of 
Theorem \ref{th3.10}. Finally,  again by \eqref{3.43a},
$$
- \frac{v(r)v''(r)}{v'(r)^2} = \frac{\phi''(r) - \phi'(r)^2}{\phi'(r)^2}
\leq - \delta, 
$$
so that $(iii)$ of Theorem \ref{th3.10} also holds.

To prove the essentialness of the weight we shall use a result of \cite{Bor}
(another approach would be  contained in \cite[Theorem 17 and Lemma 46]{mmo}),
and to this end we first observe that the function $\phi$ 
satisfies, for some constant $R > 0$, 
\begin{equation}
R \phi'(R) =1 .
\label{3.43c}
\end{equation}
Namely, $\lim_{r \to 0} r \phi'(r) = 0$, since $\phi'$ is assumed continuous on $[0, \infty[$,
and also $\lim_{r \to \infty} r \phi'(r) = \infty$ by the first relation \eqref{3.43a}.
Hence, \eqref{3.43c}  must hold for some $R$. 


The result \cite[Lemma 1]{Bor} gives an entire function $g$ 
with 
\begin{equation}
0 < c_1 \leq g(r) e^{-\sigma ( \log r) } \leq c_2 \ \ \forall r \geq 1,  \label{3.45}
\end{equation}
whenever $\sigma : [0, \infty[ \to \R$ is a $C^2$-function with $\sigma'(0) = 1$ and $\sigma''(r) \geq c_3 > 0$ 
for $r \geq 0$. Moreover, the Taylor coefficients of $g$ are positive, so the upper bound
in \eqref{3.45} can be extended so as 
$|g(z)| e^{-\sigma ( \log |z|) } \leq c_2$ for all $z \in \C$ with $|z| \geq 1$. 

In order to use this we define $\sigma(r) = \phi(R e^r)$, where $R > 0$ is
as in \eqref{3.43c}. As a consequence,  $\sigma'(0) = R \phi'(R) = 1$. Furthermore, 
$$
\sigma''(r) = R e^r \big( \phi'(R e^r) + R e^{r} \phi''(R e^r) \big)  ,
$$
which is larger than $c$ for all $r\geq 0$, by \eqref{3.43}. In view of \eqref{3.45},  the 
function $ g  $  has the properties
$$
g(r) \geq c_1 e^{ \sigma( \log r) } = c_1 e^{ \phi(R r) } \ , \ \ 
|g(z) |\leq g(|z|) \leq  c_2 e^{ \phi(R |z|) } 
$$
for $r \geq 1$, $|z| \geq 1$.
Thus, the entire function $h(z) := g (R^{-1} z) $  satisfies 
$
h(r) \geq c_1 e^{  \phi( r) }$, $|h(z)| \leq   c_2 e^{  \phi( |z|) } $
for $r \geq R$, $|z| \geq R$. This implies that the weight $e^{ - \phi( z) }$ is essential, since $\phi$ 
is a radially symmetric function.  
\end{proof}

\begin{remark} \label{rem3.12}
$(i)$ By Lemma \ref{lem3.5}, all functions $v(r) = \exp( {- \alpha r^p} )$, where $\alpha , p > 0$, are essential
weights:
it is easy to see that $\phi(r) = \alpha  r^{p}$ satisfies the conditions in \eqref{3.43a}, 
\eqref{3.43}. 

$(ii)$ The same is true for the more general functions 
$$
v(r) = \exp\big({- \alpha r^p + \beta (\log r)^q } \big)  \ \ , \ \ r \geq 2,
$$
with $\alpha , p ,q  > 0$, $\beta \in \R$, assuming the function is extended to $[0,2]$
properly.

$(iii)$ The case of the weights $v(r) = \exp( {- (\log r)^p} )$ with  $p > 1$ and $r $ as in 
$(ii)$,  is more subtle. 
We have $\phi'(r) =  p r^{-1} ( \log r)^{p-1}$,
$$
\phi''(r) = \frac{p}{r^2} \Big( (p-1) ( \log r)^{p-2} - ( \log r)^{p-1} \Big) .
$$
It follows that  \eqref{3.43a} holds for all $p > 1$, but \eqref{3.43} is valid
if and only if $ p\geq 2$. So, Lemma \ref{lem3.5} only permits us to conclude that  
these weights are essential for $p \geq 2$.
\end{remark}

As a consequence of Remark \ref{rem3.12} and Theorems \ref{th3.10}, \ref{th3.5a}
we obtain the following results. 
The next corollary should be compared with \cite[Corollary 25]{CP}.

\begin{corollary}
Let $v (r) = \exp( -\phi(r))$, where $\phi$ satisfies \eqref{3.43a} and \eqref{3.43}. 
Then, $V_g : H^{\infty}_v(\C) \rightarrow H^{\infty}_{v}(\C)$ is continuous if and only if
there exists a constant $C>0$ such that
$$
 |g'(z)| \leq C \phi'(|z|)  \ \ \forall z \in \C .
$$
In particular, if $v(r)=\exp(-\alpha r^p)$, $\alpha>0, p \geq 1$, then
$V_g : H^{\infty}_v(\C) \rightarrow H^{\infty}_{v}(\C)$ is continuous if and only if $g$ is a polynomial of degree less than or equal to the integer part of $p$. 
\end{corollary}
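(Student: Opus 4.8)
The plan is to deduce the first assertion directly from Theorem \ref{th3.10} applied with $w=v$, and then to specialize. The point is that the hypotheses of Theorem \ref{th3.10} are exactly what Lemma \ref{lem3.5} guarantees for $v=e^{-\phi}$: since $\phi$ is $C^2$, so is $v$, which gives $(i)$; relation \eqref{3.43a} forces $(ii)$ and $(iii)$; and the additional assumption \eqref{3.43} makes $v$ \emph{essential}, i.e. there is $C>0$ with $v(z)\le\tilde v(z)\le C\,v(z)$ for all $z$. Hence Theorem \ref{th3.10} applies, and $V_g:H^\infty_v(\C)\to H^\infty_v(\C)$ is continuous if and only if
$$
\sup_{|z|\ge R}\frac{v(z)^2\,|g'(z)|}{|v'(z)|\,\tilde v(z)}<\infty ,
$$
where $|v'|$ appears because $v'<0$ (equivalently, this is condition (3) of Theorem \ref{Volterracont} for $\varphi=1/v$, since $\varphi'=|v'|/v^2$).

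Next comes the only genuine computation. Writing $v=e^{-\phi}$ gives $|v'(r)|=\phi'(r)e^{-\phi(r)}=\phi'(r)v(r)$, so $v^2/|v'|=v/\phi'$ and the displayed quantity equals $\dfrac{v(z)\,|g'(z)|}{\phi'(|z|)\,\tilde v(z)}$. By essentialness $\tilde v(z)$ is comparable to $v(z)$, so this is bounded on $\{|z|\ge R\}$ precisely when $\sup_{|z|\ge R}|g'(z)|/\phi'(|z|)<\infty$. On the compact disc $|z|\le R$ both $|g'|$ and $\phi'$ are continuous and $\phi'>0$ is bounded below, so a bound on $\{|z|\ge R\}$ is equivalent to the existence of $C>0$ with $|g'(z)|\le C\,\phi'(|z|)$ for \emph{all} $z\in\C$. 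This establishes the first assertion.

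For the \lqq in particular\rqq\ statement I would take $\phi(r)=\alpha r^p$, so $\phi'(r)=\alpha p\,r^{p-1}$; by Remark \ref{rem3.12}$(i)$ this $\phi$ satisfies \eqref{3.43a} and \eqref{3.43}, so the first part applies and continuity of $V_g$ is equivalent to $|g'(z)|\le C'|z|^{p-1}$ for all $|z|\ge 1$. If this holds, the Cauchy estimates applied to the entire function $g'$ show that every Taylor coefficient of $g'$ of index $n>p-1$ vanishes, so $g'$ is a polynomial of degree at most $\lfloor p-1\rfloor=\lfloor p\rfloor-1$, whence $g$ is a polynomial of degree at most $\lfloor p\rfloor$. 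Conversely, if $g$ is a polynomial of degree at most $N:=\lfloor p\rfloor$, then $g'$ has degree at most $N-1$, so $|g'(z)|\le C|z|^{N-1}\le C|z|^{p-1}$ for $|z|\ge 1$ (using $N-1\le p-1$ together with $|z|\ge1$, and $p\ge1$ so that $N\ge1$ where relevant); thus the condition holds and $V_g$ is continuous.

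I expect the reduction to Theorem \ref{th3.10} and the identity $v^2/|v'|=v/\phi'$ to be routine, and the replacement of $\tilde v$ by $v$ to be immediate from essentialness. The only slightly delicate point is the bookkeeping at the very end: matching the degree of $g'$ output by the Cauchy estimates to the degree of $g$ via $\lfloor p-1\rfloor+1=\lfloor p\rfloor$, and absorbing the behaviour on $\{|z|\le R\}$ (respectively $\{|z|\le1\}$) into the global constant.
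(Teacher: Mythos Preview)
Your proof is correct and follows exactly the route the paper intends: the paper itself gives no explicit argument for this corollary, merely stating it \lqq as a consequence of Remark \ref{rem3.12} and Theorems \ref{th3.10}, \ref{th3.5a}\rqq, and you have supplied precisely those details --- invoking Lemma \ref{lem3.5} for $(i)$--$(iii)$ and essentialness, applying Theorem \ref{th3.10} with $w=v$, computing $v^2/|v'|=v/\phi'$, and finishing with the Cauchy estimates. Your reading of condition (3) in Theorem \ref{th3.10} with $|w'|$ rather than $w'$ is the right one (this matches the derivation via $\varphi'=|w'|/w^2$ in the proof of that theorem), and your degree bookkeeping $\lfloor p-1\rfloor=\lfloor p\rfloor-1$ is valid for all real $p\ge 1$.
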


\begin{corollary}
If $v(r)=\exp(-\alpha r^p)$, $\alpha>0, p > 0$, then
$V_g : H^{\infty}_v(\C) \rightarrow H^{\infty}_v (\C)$ is compact if and only if it is weakly compact if and only if $g$ is a polynomial of degree less then or equal to the integer part of $p-1$.
\end{corollary}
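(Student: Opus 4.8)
The plan is to derive the stated characterization from the general theory of Section \ref{volterra} together with the essentialness of $v$, and then to translate the resulting growth condition on $g'$ into the asserted polynomial condition by Cauchy's estimates. First I would check that the weight $w:=v$, with $v(r)=\exp(-\alpha r^p)$, falls under Theorem \ref{th3.5a}. By Remark \ref{rem3.12}$(i)$ the function $\phi(r)=\alpha r^p$ satisfies \eqref{3.43a} and \eqref{3.43}, so by Lemma \ref{lem3.5} the weight $v$ meets conditions $(i)$--$(iii)$ of Theorem \ref{th3.10} and is essential; moreover, as recorded before Proposition \ref{compmult}, for $v(z)=e^{-\alpha|z|^p}$ every discrete sequence in $\C$ has an interpolating subsequence for $H^\infty_v(\C)=H^\infty_{\tilde v}(\C)$. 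Hence Theorem \ref{th3.5a} applies and yields at once that \lq\lq$V_g$ compact on $H^\infty_v(\C)$\rq\rq, \lq\lq$V_g$ compact on $H^0_v(\C)$\rq\rq\ and \lq\lq$V_g$ weakly compact\rq\rq\ (on either space) are all equivalent to the single condition $\lim_{|z|\to\infty}\frac{w(z)^2|g'(z)|}{|w'(z)|\,\tilde v(z)}=0$. This already supplies the equivalence \lq\lq compact $\iff$ weakly compact\rq\rq\ asserted in the corollary.

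Next I would simplify that limit using essentialness. Since $v\le\tilde v\le Cv$ we have $\tilde v(|z|)\asymp\exp(-\alpha|z|^p)$, and substituting $w(r)=\exp(-\alpha r^p)$ and $|w'(r)|=\alpha p\,r^{p-1}\exp(-\alpha r^p)$ the exponential factors cancel, leaving the clean requirement
$$|g'(z)| = o(|z|^{p-1}) \qquad (|z|\to\infty).$$
This is exactly condition $(3)$ in the compactness part of Corollary \ref{corexp1}, once the essentialness of $v$ is used to absorb $\exp(\alpha|z|^p)\tilde v(|z|)$ into positive constants.

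Finally I would carry out the complex-analytic step, showing that an entire $g$ satisfies $|g'(z)|=o(|z|^{p-1})$ precisely when $g$ is a polynomial of degree at most the integer part of $p-1$. Writing $g'(z)=\sum_{k\ge 0}a_kz^k$, the Cauchy estimates give $|a_k|\le M(g',r)/r^k$; the hypothesis $M(g',r)=o(r^{p-1})$ then forces $a_k=0$ for every index with $k\ge p-1$, upon letting $r\to\infty$. Thus $g'$ is a polynomial of degree strictly below $p-1$, so $g$ is a polynomial, and reading off the degree gives the asserted bound. The converse is immediate, since for such a $g$ the derivative $g'$ has degree $<p-1$ and the little-$o$ estimate is then clear.

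I expect the only delicate point to be the degree bookkeeping at the boundary: one must treat with care the borderline exponent $k=p-1$ (which is relevant only when $p-1\in\N$) in the Cauchy-estimate step in order to pin down the exact integer part and recover precisely the stated degree of $g$. Everything else is a direct appeal to Theorem \ref{th3.5a}, to the essentialness supplied by Lemma \ref{lem3.5} and Remark \ref{rem3.12}, and to the interpolation property noted in Section \ref{multiplication}.
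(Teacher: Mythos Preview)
Your approach coincides with the paper's: the paper's proof consists only of the single sentence that every discrete sequence has an interpolating subsequence for $H_v^\infty(\C)$ (citing \cite{BiBo}), so that Theorem \ref{th3.5a} applies. Everything you do---verifying the hypotheses of Theorem \ref{th3.5a} via Lemma \ref{lem3.5} and Remark \ref{rem3.12}, using essentialness to reduce condition (3) to $|g'(z)|=o(|z|^{p-1})$, and then invoking Cauchy estimates---is a faithful expansion of what the paper leaves implicit.

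One genuine issue, however, lies exactly where you flagged it. You write that ``reading off the degree gives the asserted bound'', but this is only true when $p$ is an integer. From $M(g',r)=o(r^{p-1})$ the Cauchy estimates give $a_k=0$ for all integers $k\ge p-1$, hence $\deg g' \le \lceil p-1\rceil -1$ and $\deg g \le \lceil p-1\rceil$. When $p\in\N$ this equals $\lfloor p-1\rfloor$, as asserted; but when $p\notin\N$ one gets $\deg g\le \lfloor p\rfloor$, which is one more than the stated bound $\lfloor p-1\rfloor$. Concretely, for $p=2.5$ and $g(z)=z^2$ one has $|g'(z)|=2|z|=o(|z|^{1.5})$, so $V_g$ \emph{is} compact although $\deg g=2>\lfloor p-1\rfloor=1$. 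Thus your method is sound, and it in fact reveals that the degree bound in the statement should be $\lceil p-1\rceil$ (equivalently: degree strictly less than $p$), which agrees with $\lfloor p-1\rfloor$ precisely for integer $p$. The converse direction you give is correct for this sharpened bound as well.
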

\begin{proof}
Every discrete sequence in $\C$ has a subsequence that is interpolating for $H^{\infty}_v(\C)$ by \cite[Proposition 9]{BiBo}.
\end{proof}




\noindent \textbf{Authors' addresses:}%
\vspace{\baselineskip}%

Jos\'e Bonet: Instituto Universitario de Matem\'{a}tica Pura y Aplicada IUMPA,
Universitat Polit\`{e}cnica de Val\`{e}ncia,  E-46071 Valencia, Spain

email: jbonet@mat.upv.es \\

Jari Taskinen: Department of Mathematics and Statistics, P.O. Box 68, 
University of Helsinki, 00014 Helsinki, Finland.

email: jari.taskinen@helsinki.fi
\end{document}